\providecommand\@dotsep{5}
\renewcommand{\listoftodos}[1][\@todonotes@todolistname]{%
  \@starttoc{tdo}{#1}}
\newcommand{\ord}{\mathrm{ord}}
\newcommand{\A}{\mathbb{A}}
\newcommand{\bG}{ \boldsymbol{G}    }
\newcommand{\biota}{ \boldsymbol{\iota}    }
\newcommand{\blambda}{ \boldsymbol{\lambda}    }
\newcommand{\Q}{\mathbb{Q}}
\newcommand{\Z}{\mathbb{Z}}
\newcommand{\C}{\mathbb{C}}
\newcommand{\Hom}{\mathrm{Hom}}
\newcommand{\End}{\mathrm{End}}
\newcommand{\GU}{\mathrm{GU}}
\newcommand{\R}{\mathbb{R}}
\newcommand{\G}{\mathbb{G}}
\title{Supersingular Loci of Some Shimura Varieties for  CM Unitary Groups}
\author{Maria Fox}
\address{Department of Mathematics \\ University of Oregon }
\email{mariafox@uoregon.edu}
\begin{document}

\newtheorem*{theorem*}{Theorem}
\newtheorem*{theoremA*}{Theorem A}
\newtheorem*{theoremB*}{Theorem B}
\newtheorem*{theoremC*}{Theorem C}
\newtheorem*{theoremD*}{Theorem D}
\newtheorem{definition}{Definition}[section]
\newtheorem{theorem}[definition]{Theorem}
\newtheorem{claim}[definition]{Claim}
\newtheorem{example}[definition]{Example}
\newtheorem{lemma}[definition]{Lemma}
\newtheorem{corollary}[definition]{Corollary}
\newtheorem{proposition}[definition]{Proposition}
\newtheorem{notation}[definition]{Notation}

\setcounter{tocdepth}{1}

\maketitle

\begin{abstract}
In this note, we study Shimura varieties for the groups $\GU(V)$, where $V$ is a Hermitian space relative to a CM extension $E/E^+$. We give a description of the supersingular locus of the fiber at a prime $\nu$ over $p$ of such a Shimura variety, under the assumptions that $\dim_E(V) \leq 4$ and that the prime $p$ splits completely in $E^+$ and is unramified in $E$.
\end{abstract}

\section{Introduction}\label{intro}

This note  gives a description of the supersingular loci of Shimura varieties for unitary groups relative to a CM extension  $E/E^+$, under  two conditions: First, we consider abelian varieties of a  fixed dimension $m[E^+:\Q]$, and we require that $m \leq 4$. Second,  this supersingular locus is defined by first taking the special fiber at a prime $\nu$ of the reflex field $F$, and we assume that $\nu$ is over a prime $p$ that \emph{splits completely} in $E^+$ and is \emph{unramified} in $E$.  

This description given in this note of such a supersingular locus follows  from combining the local results of Howard-Pappas ~\cite{GU22}, Vollaard ~\cite{V}, Vollaard-Wedhorn ~\cite{VW}, and the author ~\cite{GL4}, with other well-known results. To an expert in the field, it is likely apparent that the geometry of the supersingular locus at a prime $\nu$ under the above assumptions is determined by the local results in ~\cite{GU22}, ~\cite{V}, ~\cite{VW},  and ~\cite{GL4}.  However, no such description is currently available in the literature. As the problem described above is a preliminary case of the question of describing the supersingular loci of Shimura varieties for unitary groups relative to an arbitrary CM extension, and as this more general question may have interesting applications to related fields, it seems useful to record the geometry of the supersingular locus in this preliminary case.

 To give the precise statement, we must first introduce some notation: we have fixed a CM  field $E$ with totally real subfield $E^+$. Let $n = [E^+:\Q]$. We'll denote the $n$ embeddings of $E^+$ into $\overline{\Q}$ by $\{ \varphi_{i} \}_{i=1}^n$ and the $2n$ embeddings of $E$ into $\overline{\Q}$ by $\{ \varphi_{i,a} , \varphi_{i,b} \}_{i=1}^n$, where both $\varphi_{i,a}$ and $\varphi_{i,b}$ restrict to $\varphi_i$ on $E^+$. The nontrivial automorphism of $E$ over $E^+$ will be denoted as $e \mapsto \overline{e}$.

Let $p$ be an odd prime totally split in $E^+$ and unramified in $E$, let $m \leq 4$ be an integer, and let $\mathcal{O} \subset E$ be the integral closure of $\Z_{(p)}$ in $E$.  Fix a free $\mathcal{O}$-module $V$ of rank $m$ with a perfect $\mathcal{O}$-valued hermitian form $\langle \cdot , \cdot \rangle$. We'll denote the signature of $V \otimes_{\mathcal{O}} E$ at the place defined by $\varphi_i$ as $(a_i, b_i)$ (we will assume without loss of generality that $a_i \leq b_i$ for all $i$). Note that $a_i + b_i = m$ for all $i$. Let $G = \mathrm{GU}(V)$.  There is one technical note to make:  we must assume that not all of the signatures are $(0,m)$, in order for the moduli problem described below to be representable by a Shimura variety. 

 Let $K_p = G(\Z_p)$, choose a compact open subgroup $K^{p} \subset G(\mathbb{A}_p^f)$, and let $K = K_pK^p$. When $mn$ is even, the group $\mathrm{GU}(V \otimes_{\mathcal{O} } E)$ satisfies the Hasse principle, and so for sufficiently small $K^{p}$ there is a unitary Shimura variety $\mathrm{Sh}^K_{V}$ that parametrizes the prime-to-$p$ isogeny classes of quadruples $(A, \iota, \lambda, \eta^pK^p)$, where

\begin{itemize}
\item{ $A$ is an abelian variety over $\C$ of dimension $mn$.}
\item{ $\iota: \mathcal{O} \rightarrow \mathrm{End}(A) \otimes_{\Z} \Z_{(p)}$ is an action of $\mathcal{O}$ on $A$, satisfying the \emph{global signature $((a_i,b_i))_{i=1}^n$ condition}: for all $e \in \mathcal{O}$,
$$\det(T - \iota(e); \mathrm{Lie}(A) ) = \prod_{i=1}^n(T - \varphi_{i,a}(e))^{a_i}(T - \varphi_{i,b}(e))^{b_i}.$$}
\item{$\lambda \in \Hom(A, A^\vee) \otimes_{\Z} \Z_{(p)}$ is a prime-to-$p$ quasi-polarization of $A$, which satisfies the \emph{Rosati involution condition}: for all $e \in \mathcal{O}$,
$$\lambda \circ \iota(\overline{e}) = \iota(e)^\vee \circ \lambda,$$
where $\iota(e)^\vee$ is the dual isogeny to $\iota(e)$.}
\item{The level structure $\eta^pK^p$ is the $K^p$ orbit of an $\mathcal{O} \otimes \A_f^p$-linear isomorphism:
$$\eta^p: \widehat{T(A)}^p \otimes \A_f^p \rightarrow V \otimes \A_f^p$$
that respects the hermitian forms on either side, up to scaling by $(\A_f^p)^\times$.}
\end{itemize}

(If $mn$ is odd, the complex manifold parametrizing the above moduli problem is a disjoint union of finitely many copies of the Shimura variety $\mathrm{Sh}^K_{V}$.) Note that the terminology ``global signature condition" used above is not standard. The purpose of this phrase is to differentiate the signature condition in the description of the Shimura variety above from the signature conditions that will appear later in the descriptions of the relevant Rapoport-Zink spaces, which will be referred to as ``local signature conditions."

Let $F$ be the reflex field of the Shimura variety $\mathrm{Sh}^K_{V}$. For sufficiently small $K^p$, there is an integral canonical model $\mathcal{M}_{K}$ over $\mathcal{O}_{F} \otimes_{\Z} \Z_{(p)}$,  with the property that the complex manifold $\mathcal{M}_K(\C)$ parametrizes the moduli problem above.  (For details, see Kottwitz ~\cite{Ko}).

Let $\nu$ be a prime of $F$ over $p$, and fix an algebraic closure $k$ of $\mathcal{O}_{F}/ \nu \mathcal{O}_{F}$. We will denote by $\mathcal{M}_{K, \nu}^{ss}$ the reduced locus of $\mathcal{M}_K \times_{    \mathcal{O}_{F} \otimes_{\Z} \Z_{(p)}  } k $ parametrizing quadruples where the abelian variety $A$ is supersingular. This is called the supersingular locus. The choice of the prime $\nu$ determines a local signature condition, which in turn determines some parameters. The main result of this note is a description of $\mathcal{M}_{K, \nu}^{ss}$ in terms of these parameters.

\begin{theorem*}
Assume that $p$ splits completely in $E^+$ and is unramified in $E$. Fix a prime $\nu$ of the reflex field $F$ over $p$ and a sufficiently small compact open subgroup $K^p \subset G(\mathbb{A}_f^p)$. Let $(( a_i, b_i))_{i=1}^n$ be the tuple  defining the global signature condition for $\mathcal{M}_{K}$, and let  $( (a_{\nu, i}, b_{\nu,i}) )_{i=1}^n$ be the tuple of local signatures determined by $\nu$ as in Definition \ref{localsign}.  We will describe the geometry of the supersingular locus $\mathcal{M}_{K,\nu}^{ss}$ in the four cases $m=1$, $2$, $3$, and $4$. \\

\textbf{\underline{$\boldsymbol{m=1}:$} }\\
 If all $\mathfrak{p}_i$ are inert in $E$, the reduced $k$-scheme  $\mathcal{M}_{K,\nu}^{ss}$ is zero-dimensional.  If any of the primes $\mathfrak{p}_i$  are split in $E$, then  $\mathcal{M}_{K,\nu}^{ss}$  is empty. \\

\textbf{\underline{$\boldsymbol{m=2}:$} }\\
Define the following constant based on the prime $\nu$:
$$c_\nu  \ =  \ \text{ the number of } \  (a_{\nu,i}, b_{\nu,i}) = (0,2)  \ \text{ such that } \ \mathfrak{p_i} \ \text{ is split in } \ E.$$
If $c_\nu= 0$, the reduced $k$-scheme $\mathcal{M}_{K,\nu}^{ss}$ is zero-dimensional.  Otherwise, $\mathcal{M}_{K,\nu}^{ss}$ is empty. \\

\textbf{\underline{$\boldsymbol{m=3}:$} }\\
If any of the primes  $\mathfrak{p}_i$ are split in $E$, the supersingular locus  $\mathcal{M}_{K,\nu}^{ss}$ is empty. Otherwise, define the following constant which depends only on the global signature condition:
$$c \ =  \ \text{ the number of } \  (a_{i}, b_{i}) = (1,2) $$
Then the reduced $k$-scheme $\mathcal{M}_{K,\nu}^{ss}$  has geometry as follows:
\begin{itemize}
\item{The supersingular locus $\mathcal{M}_{K,\nu}^{ss}$ is equi-dimensional of dimension $c$.}
\item{Each irreducible component of  $\mathcal{M}_{K,\nu}^{ss}$ is isomorphic over $k$ to $C^{c}$,
 a product (over $k$) of $c$ copies of the Fermat curve $C$.}
 \item{Any two irreducible components either intersect trivially or have intersection isomorphic over $k$ to $C^{t}$
 for some integer $0 \leq t \leq c$. \\}
\end{itemize}

\textbf{\underline{$\boldsymbol{m=4}:$} }\\
Define the following constants based on the prime $\nu$:
$$c_\nu \ =  \ \text{ the number of } \  (a_{\nu,i}, b_{\nu,i}) = (0,4)  \ \text{ such that } \ \mathfrak{p_i} \ \text{ is inert in } \ E$$
$$d_\nu \ =  \ \text{ the number of } \   (a_{\nu,i}, b_{\nu,i})  = (1,3)  \ \text{ such that } \ \mathfrak{p_i} \ \text{ is inert in } \ E$$
$$e_\nu \ =  \ \text{ the number of } \   (a_{\nu,i}, b_{\nu,i})  = (2,2)  \ \text{ such that } \ \mathfrak{p_i} \ \text{ is inert in } \ E$$
$$f_\nu \ =  \ \text{ the number of } \   (a_{\nu,i}, b_{\nu,i})  = (2,2)  \ \text{ such that } \ \mathfrak{p_i} \ \text{ is split in } \ E$$
$$g_\nu \ = \ \text{ the number of } \   (a_{\nu,i}, b_{\nu,i}) \neq (2,2)  \ \text{ such that } \ \mathfrak{p_i} \ \text{ is split in } \ E.$$
Note that any of these integers could be zero, and that $c_\nu + d_\nu + e_\nu + f_\nu + g_\nu = n$. Either $g_\nu > 0$, in which case $\mathcal{M}_{K,\nu}^{ss}$ is empty, or $g_\nu = 0$, in which case $\mathcal{M}_{K,\nu}^{ss}$ has geometry as follows:

\begin{itemize}
\item{The supersingular locus   $\mathcal{M}_{K, \nu}^{ss}$ is equi-dimensional  of dimension $d_\nu+2e_\nu+f_\nu$.\\}

\item{Each irreducible component of $\mathcal{M}_{K, \nu}^{ss}$ is isomorphic over $k$ to:
$$ C^{d_\nu} \times_k  S^{e_\nu} \times_k (\mathbb{P}^1)^{f_\nu}$$ 
 a product (over $k$) of $d_\nu$ copies of the Fermat curve,  $e_\nu$ copies of the Fermat surface, and  $f_\nu$ copies of the projective line. \\}
 
\item{ Any two irreducible components either intersect trivially or have intersection isomorphic over $k$ to:
 $$ C^r \times_k S^{s_1} \times_k (\mathbb{P}^1)^{s_2 +t}$$
 for some nonnegative integers $r, s_1, s_2, t$ satisfying the following three conditions:
\begin{enumerate}
\item{ $$0 \leq r \leq d_\nu$$ }
\item{ $$0 \leq s_1 + s_2 \leq e_\nu$$ }
\item{ $$0 \leq t \leq f_\nu.$$ }
\end{enumerate} }
\end{itemize}
\end{theorem*}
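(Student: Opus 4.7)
The plan is to combine Rapoport--Zink uniformization with the product decomposition afforded by the hypothesis that $p$ splits completely in $E^+$, and then to feed in the known local descriptions from \cite{GU22, V, VW, GL4}. Concretely, I would first invoke the Rapoport--Zink uniformization of the basic (= supersingular) locus in this unramified PEL setting: $\mathcal{M}_{K,\nu}^{ss}$ is a finite disjoint union of quotients of the form $I(\Q) \backslash (\mathcal{N}^{\mathrm{red}} \times G(\mathbb{A}_f^p)/K^p)$, where $\mathcal{N}$ is the Rapoport--Zink space attached to the local PEL datum at $\nu$ and $I$ is an inner form of $G$ anisotropic modulo center at $p$ and at $\infty$. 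For $K^p$ sufficiently small the stabilizers in $I(\Q)$ of the various double cosets are trivial, so each connected component of $\mathcal{M}_{K,\nu}^{ss}$ is isomorphic as a reduced $k$-scheme to a connected component of $\mathcal{N}^{\mathrm{red}}$. It therefore suffices to describe $\mathcal{N}^{\mathrm{red}}$ together with the intersection behavior of its irreducible components.

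Because $p$ splits completely in $E^+$, we have $\mathcal{O} \otimes_{\Z} \Z_p \cong \prod_{i=1}^n \mathcal{O}_{E, \fp_i}$, and the hermitian module $V \otimes_{\Z} \Z_p$ decomposes orthogonally as $\bigoplus_{i=1}^n V_{\fp_i}$. Applying the corresponding idempotents to $p$-divisible groups yields a product factorization $\mathcal{N} \cong \prod_{i=1}^n \mathcal{N}_i$, in which each $\mathcal{N}_i$ depends only on $m$, on the local signature $(a_{\nu,i}, b_{\nu,i})$, and on whether $\fp_i$ is split or inert in $E$. The problem is thereby reduced to describing each local factor $\mathcal{N}_i^{\mathrm{red}}$ independently.

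I would then carry out the local case analysis as follows.
\begin{itemize}
\item If $\fp_i$ is split in $E$, then $\mathcal{O}_{E,\fp_i} \cong \Z_p \times \Z_p$; each object of $\mathcal{N}_i$ splits as $G_a \times G_b$ with the polarization identifying $G_b \cong G_a^\vee$, and the supersingular condition forces isoclinic slope $1/2$ on each factor. This is compatible with the local signature only when $a_{\nu,i} = b_{\nu,i} = m/2$, which gives the emptiness clauses whenever the relevant constant ($c_\nu$ for $m=2$, the split-prime condition for $m=1,3$, or $g_\nu$ for $m=4$) is positive. In the remaining case $(a_{\nu,i}, b_{\nu,i}) = (2,2)$ at a split prime (counted by $f_\nu$), the space $\mathcal{N}_i^{\mathrm{red}}$ is identified by \cite{GU22} with a union of copies of $\mathbb{P}^1$, with intersection combinatorics controlled by the Bruhat--Tits building of the local group; the analogous $(1,1)$ split case relevant to $m=2$ is zero-dimensional by the same argument.
\item If $\fp_i$ is inert in $E$, then $\mathcal{O}_{E,\fp_i}$ is the ring of integers in the unramified quadratic extension of $\Q_p$. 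For local signatures $(0,m)$ and $(1,m-1)$ the description of $\mathcal{N}_i^{\mathrm{red}}$ is given by \cite{V, VW}: signature $(0,m)$ gives a finite set of points, and signature $(1, m-1)$ gives irreducible components isomorphic to the Fermat curve $C$ (a generalized Deligne--Lusztig variety). For $m=4$ and local signature $(2,2)$ inert (counted by $e_\nu$), \cite{GL4} identifies each irreducible component of $\mathcal{N}_i^{\mathrm{red}}$ with the Fermat surface $S$ and records their pairwise intersections.
\end{itemize}

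Finally, I would assemble the global description from the product decomposition. Each irreducible component of $\mathcal{N}^{\mathrm{red}} = \prod_i \mathcal{N}_i^{\mathrm{red}}$ is exactly the product of irreducible components of the factors, yielding the stated product of $d_\nu$ Fermat curves, $e_\nu$ Fermat surfaces, and $f_\nu$ copies of $\mathbb{P}^1$ in the $m=4$ case (and the analogous products in lower $m$). Equidimensionality and the dimension formula $d_\nu + 2e_\nu + f_\nu$ follow by summing local dimensions ($1$ for each Fermat curve, $2$ for each Fermat surface, $1$ for each $\mathbb{P}^1$). The intersection statement follows because any intersection of two global components factors as the product of pairwise local intersections, each of which is of the stated form by the cited references, so that the integers $r, s_1, s_2, t$ in the theorem are simply the numbers of each factor type produced at the respective local factors. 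The main obstacle I expect will be the bookkeeping: tracking precisely how the prime $\nu$ of $F$ selects the local signatures $(a_{\nu,i}, b_{\nu,i})$ out of the global signatures via the embeddings $\{\varphi_{i,a}, \varphi_{i,b}\}$, and verifying that the product decomposition of $\mathcal{N}$ is compatible with the polarization datum and with the framing object chosen in the uniformization, so that the local descriptions from \cite{GU22, V, VW, GL4} apply verbatim to each factor.
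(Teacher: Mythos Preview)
Your strategy is the paper's: Rapoport--Zink uniformization reduces to the local Rapoport--Zink space, the complete splitting of $p$ in $E^+$ gives a product decomposition of that space into rank-$m$ factors indexed by the $\mathfrak{p}_i$, and the cited local results describe each factor. Two corrections: you have swapped the attributions for the two $(2,2)$ cases (the split $(2,2)$ factor with $\mathbb{P}^1$-components is \cite{GL4}, while the inert $(2,2)$ factor with Fermat-surface components is Howard--Pappas \cite{GU22}); and the naive product $\mathcal{N}\cong\prod_i\mathcal{N}_i$ is false as stated, because the similitude constant $c(\rho)\in\Q_p^\times$ must agree across factors, so the correct statement (and the one the paper proves) is $\mathcal{N}^j\cong\prod_i\mathcal{N}_i^j$ on each piece where $\mathrm{ord}_p(c(\rho))=j$---this does not change the component-by-component geometry you describe, but it is exactly the ``compatibility with the polarization datum'' you flag at the end. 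Also note that inert signature $(1,1)$ (relevant to $m=2$) is zero-dimensional, not a Fermat curve.
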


This result follows from combining the Rapoport-Zink Uniformization Theorem and the local results of  ~\cite{GU22},  ~\cite{V},  ~\cite{VW},  and  ~\cite{GL4}.  The intuition behind this is that the unitary Shimura varieties in this note parametrize abelian varieties with an action of $E/E^+$ and some extra structure. By the Rapoport-Zink Uniformization Theorem, we may reduce the study of the supersingular loci to the study of the relevant Rapoport-Zink spaces, which are moduli spaces of $p$-divisible groups with an action of $E \otimes_{\Q} \Q_p / E^+ \otimes_{\Q} \Q_p$. Depending on the factorization of $p$ in $E$, this extension decomposes as a product of some number of copies of the quadratic unramified extension $L / \Q_p$ and some number of copies of the ``split" extension $\Q_p \times \Q_p / \Q_p$.

The papers ~\cite{GU22}, ~\cite{V}, and ~\cite{VW}  give a description of the geometry of some Rapoport-Zink spaces parametrizing $p$-divisible groups with actions of $L / \Q_p$ of signature $(2,2)$, $(1,2)$, and $(1,3)$, respectively, while ~\cite{GL4} gives a description of a Rapoport-Zink space parametrizing $p$-divisible groups with an action of $\Q_p \times \Q_p / \Q_p$ of signature $(2,2)$.  Depending on the prime $\nu$, these local results may be combined to describe the geometry of the Rapoport-Zink space relevant to the study of the supersingular locus $\mathcal{M}_{K,\nu}^{ss}$. Then, the Rapoport-Zink Uniformization Theorem may be used to produce the above description of $\mathcal{M}_{K,\nu}^{ss}$.

\subsection*{Acknowledgements} I would like to thank  Ellen Eischen, Elena Mantovan, and Rachel Pries for interesting conversations regarding the topics discussed in this paper. I would also like to thank  Ben Howard and Keerthi Madapusi Pera for their comments.

\section{Unitary Rapoport-Zink Spaces}\label{RZdef}

In this section, we will define the Rapoport-Zink spaces relevant to the study of the supersingular loci $\mathcal{M}_{K,\nu}^{ss}$. We will begin with some notation: we've assumed the prime $p$ splits completely in $E^+$, so let $p \mathcal{O}_{E^+} = \mathfrak{p}_1 \cdots \mathfrak{p_n}$. Because the prime $p$ is split completely in $E^+$, there are $n$ distinct embeddings $\{ \psi_i \}_{i=1}^n$ of $E^+$ into $\overline{\Q}_p$, which all have image in $\Q_p$. Viewed in another way, these $n$ embeddings give $n$ distinct places of $E^+$, corresponding to the $n$ distinct prime ideals in the factorization of $p\mathcal{O}_{E^+}$. Label the primes in the factorization of $p\mathcal{O}_{E^+}$ so that $\mathfrak{p}_i$ corresponds to $\psi_i$.

We've also assumed that $p$ does not ramify in $E$. Relabel the primes $\mathfrak{p}_i$ and the corresponding embeddings $\psi_i$ so that $\mathfrak{p}_i$ splits in $E$ for all $i \leq h$ and $\mathfrak{p}_i$ is inert in $E$ for all  $i>h$ (note that $h$ might be 0 or $n$). If $\mathfrak{p}_i$ is split in $E$, label the two primes above $\mathfrak{p}_i$ as $\mathfrak{p}_{i,a}$ and $\mathfrak{p}_{i,b}$. If the prime  $\mathfrak{p}_i$ is inert in $E$, we'll also use the notation  $\mathfrak{p}_i$ to denote a prime ideal of $E$. Label the $2n$ distinct embeddings of $E$ into $\overline{\Q}_p$ as $\{ \psi_{i,a}, \psi_{i,b} \}_{i=1}^n$, where  the embeddings $\psi_{i,a}$ and $\psi_{i,b}$ correspond respectively to the prime ideals $\mathfrak{p}_{i,a}$ and $\mathfrak{p}_{i,b}$ whenever $i \leq h$, and   both $\psi_{i,a}$ and $\psi_{i,b}$ correspond to the prime ideal $\mathfrak{p}_i$ whenever $i > h$. Note that the embeddings $\psi_{i,a}$ and $\psi_{i,b}$  have image in $\Q_p$ whenever $i \leq h$ and form a conjugate pair of embeddings into the quadratic unramified extension $L$ of $\Q_p$ whenever $i > h$.

We can understand $\mathcal{O}_{E^+} \otimes_{\Z} \Z_p$ in terms of these embeddings. We have an isomorphism:
$$\mathcal{O}_{E^+} \otimes_{\Z} \Z_p \cong \Z_p \times  \cdots \times \Z_p$$
$$e \otimes z \mapsto (\psi_i(e) z, \dots, \psi_n(e) z).$$

Similarly, there is an isomorphism:
$$\mathcal{O}_E \otimes_{\Z} \Z_p \cong  \underbrace{ (\Z_p \times \Z_p) \times \cdots \times (\Z_p \times \Z_p) }_{h} \times \underbrace{\mathcal{O}_L \times \cdots \times \mathcal{O}_L}_{n-h}.$$
$$e \otimes z \mapsto ( (\psi_{i,a}(e)z, \psi_{i,b}(e)z)_{i=1}^h,  (  \psi_{i,a}(e)z)_{i=h+1}^n ).$$

In this way, we can view $\mathcal{O}_E \otimes_{\Z} \Z_p$ over $\mathcal{O}_{E^+} \otimes_{\Z} \Z_p$ as a product of $h$ copies of the split extension $\Z_p \times \Z_p$ over $\Z_p$ and $n-h$ copies of the quadratic unramified extension $\mathcal{O}_L$ over $\Z_p$. Note that the automorphism $e \mapsto \overline{e}$ of $E$ over $E^+$ permutes the two $\Z_p$-factors of each copy of $\Z_p \times \Z_p$ over $\Z_p$ and  induces the nontrivial automorphism of each copy of  $\mathcal{O}_L$ over $\Z_p$.

Let $k$ be an algebraically closed field of characteristic $p$, and let $W = W(k)$ be the ring of Witt vectors of $k$, with fraction field $W_{\Q}$.  To define the Rapoport-Zink spaces needed to uniformize the supersingular locus $M_{K,\nu}^{ss}$, we will begin by fixing a basepoint object. Let $\boldsymbol{G}$ be a supersingular $p$-divisible group over $k$ of dimension $mn$ and height $2mn$, with an action $\boldsymbol{\iota}: \mathcal{O}_E \otimes_{\Z} \Z_p \rightarrow \End(\boldsymbol{G})$ and a principal polarization  $\boldsymbol{\lambda}: \boldsymbol{G} \rightarrow \boldsymbol{G^\vee}$. Assume that the action $\biota$ satisfies the \emph{local signature $( (a_i, b_i) )_{i=1}^n$ condition}:
$$\mathrm{det}(T - \biota(e); \mathrm{Lie}(\bG) ) = \prod_{i=1}^n(T - \psi_{i,a}(e))^{a_i}(T - \psi_{i,b}(e))^{b_i},$$
where the right hand side is reduced modulo $p$ to yield a polynomial with coefficients in $k$, and each $a_i + b_i = m$.  Also assume that the action $\biota$ and polarization $\blambda$ together satisfy the \emph{Rosati involution condition}:
$$\blambda \circ \biota(\overline{e} ) = \biota(e)^\vee \circ \blambda$$
for any $e \in \mathcal{O}_E \otimes_{\Z} \Z_{p}$, where $\biota(e)^\vee$ is the dual isogeny to $\biota(e)$.

There are two important notes to make: first, we'll see later that a triple $(\bG, \biota, \blambda)$ satisfying the above conditions only exists when certain conditions on the signature are met. Second, this local signature condition does resemble the global signature condition described in the introduction, but the local signature condition involves the embeddings $\{ \psi_{i,a}, \psi_{i,b} \}_{i=1}^n$ of $E$ into $\overline{\Q}_p$, not the embeddings $\{ \varphi_{i,a}, \varphi_{i,b} \}_{i=1}^n$ of $E$ into $\overline{\Q}$. In particular, we are not assuming that the ordered tuple $( (a_i, b_i) )_{i=1}^n$ used here to define the local signature condition is equal to the tuple used in the introduction to define the global signature condition.

Let $\mathrm{Nilp}_W$ be the category of $W$-schemes on which $p$ is locally nilpotent. For a scheme $S$ in $\mathrm{Nilp}_W$, we will use the notation $S_0 = S \otimes_{W} k$.

\begin{definition}
Let $\widetilde{\mathcal{N}}_{( (a_i, b_i) )_{i=1}^n}$ be the functor on $\mathrm{Nilp}_W$ that sends a connected $S$ in $\mathrm{Nilp}_W$ to the collection of isomorphism classes of quadruples $(G, \iota, \lambda, \rho)$, where:
\begin{itemize}
\item{$G$ is a supersingular $p$ divisible group over $S$ of dimension $mn$ and height $2mn$}
\item{ $\iota:  \mathcal{O}_E \otimes_{\Z} \Z_p \rightarrow \End(G)$ is an action of $\mathcal{O}_E \otimes_{\Z} \Z_p$}
\item{$\lambda: G \rightarrow G^\vee$ is a principal polarization}
\item{$\rho: G_{S_0} \rightarrow \bG_{S_0}$ is an $\mathcal{O}_E \otimes_{\Z} \Z_p$-linear quasi-isogeny}
\end{itemize}
such that the action $\iota$ satisfies the \emph{local signature $( (a_i, b_i) )_{i=1}^n$ condition}:
$$\mathrm{det}(T - \biota(e); \mathrm{Lie}(G) )= \prod_{i=1}^n(T - \psi_{i,a}(e))^{a_i}(T - \psi_{i,b}(e))^{b_i},$$
 where all $a_i + b_i = m$, together $\iota$ and $\lambda$ satisfy the \emph{Rosati involution condition},
 $$\lambda \circ \iota(\overline{e} ) = \iota(e)^\vee \circ \lambda,$$
 and such that $\rho^*(\blambda) = c(\rho) \lambda$ for some $c(\rho) \in \Q_p^\times$.  Two quadruples $(G_1, \iota_1, \lambda_1, \rho_1)$ and $(G_2, \iota_2, \lambda_2, \rho_2)$ are considered to be isomorphic if there is an isomorphism $\Phi: G_1 \rightarrow G_2$ of $p$-divisible groups over $S$, that is  $\mathcal{O}_E \otimes_{\Z} \Z_p$-linear, that carries $\rho_2$ to $\rho_1$, and such that $\lambda_2$ pulls back to a $\Z_p^\times$-multiple of $\lambda_1$.
 \end{definition}

By Rapoport-Zink ~\cite{RZ}, this functor is represented by a formal scheme $\widetilde{\mathcal{N} }_{( (a_i, b_i) )_{i=1}^n}$  over $\mathrm{Spf}(W)$. Let $\mathcal{N}_{( (a_i, b_i) )_{i=1}^n}$ be the underlying reduced scheme, which is a scheme over $k$.

The  local objective of this note is to relate this Rapoport-Zink space to those studied in \cite{GU22}, \cite{V}, \cite{VW},  , and \cite{GL4}. So we will now introduce the notation necessary to refer to the Rapoport-Zink spaces studied in those papers:

Let $p,q \geq 0$ be integers such that $p+q = m$ (without loss of generality, we'll always assume $p \leq q$). We will first define a unitary Rapoport-Zink space parametrizing $p$-divisible groups with an action by $\mathcal{O}_L$ of local signature $(p,q)$. Let $\psi_{L,1}$ and $\psi_{L,2}$ be the conjugate pair of $\Z_p$ morphisms from $L$ to $W$. Let $\boldsymbol{G}^{\mathrm{inert}}_{(p,q)}$ be a supersingular $p$-divisible group over $k$ of dimension $m$ and height $2m$, with an action $\boldsymbol{\iota}^{\mathrm{inert}}_{(p,q)}: \mathcal{O}_L  \rightarrow \End( \boldsymbol{G}^{\mathrm{inert}}_{(p,q)} )$ and a principal polarization  $\boldsymbol{\lambda}_{(p,q)}^{\mathrm{inert}}: \boldsymbol{G}^{\mathrm{inert}}_{(p,q)}  \rightarrow (\boldsymbol{G}^{\mathrm{inert}}_{(p,q)})^\vee$. Assume for now that the action $\biota^{\mathrm{inert}}_{(p,q)}$ satisfies the  \emph{local signature $(p,q)$ condition}:
$$\mathrm{det}(T - \biota^{\mathrm{inert}}_{(p,q)}(l); \mathrm{Lie}(\bG^{\mathrm{inert}}_{(p,q)}) ) = (T - \psi_{L,1}(l))^{p}(T - \psi_{L,2}(l))^{q},$$
where the right hand side is reduced modulo $p$ to yield a polynomial with coefficients in $k$,  and that the action $\biota^{\mathrm{inert}}_{(p,q)}$ and polarization $\boldsymbol{\lambda}_{(p,q)}^{\mathrm{inert}}$ together satisfy the \emph{Rosati involution condition}:
$$\boldsymbol{\lambda}_{(p,q)}^{\mathrm{inert}} \circ \biota^{\mathrm{inert}}_{(p,q)}(\overline{l} ) = \biota^{\mathrm{inert}}_{(p,q)}(l)^\vee \circ \boldsymbol{\lambda}_{(p,q)}^{\mathrm{inert}}$$
for any $l \in \mathcal{O}_L$, where $\biota^{\mathrm{inert}}_{(p,q)}(l)^\vee$ is the dual isogeny to $\biota^{\mathrm{inert}}_{(p,q)}(l)$. 

\begin{definition}
Let $\widetilde{\mathcal{N}}^{\mathrm{inert}}_{(p,q)}$ be the functor on $\mathrm{Nilp}_W$ that sends a connected $S$ in $\mathrm{Nilp}_W$ to the collection of isomorphism classes of quadruples $(G, \iota, \lambda, \rho)$, where 
\begin{itemize}
\item{$G$ is a supersingular $p$ divisible group of dimension $m$ and height $2m$}
\item{$\iota: \mathcal{O}_L \rightarrow \End(G)$ is an action of $\mathcal{O}_L$ meeting the \emph{local signature $(p,q)$ condition}}
\item{$\lambda: G \rightarrow G^\vee$ is a principal polarization satisfying the \emph{Rosati involution condition}}
\item{$\rho: G_{S_0} \rightarrow  (\bG^{\mathrm{inert}}_{(p,q)})_{S_0}$ is an $\mathcal{O}_L$-linear quasi-isogeny}
\end{itemize}
such that $\rho^*(  \boldsymbol{\lambda}_{(p,q)}^{\mathrm{inert}} ) = c(\rho) \lambda$ for some $c(\rho) \in \Q_p^\times$.  Two quadruples $(G_1, \iota_1, \lambda_1, \rho_1)$ and $(G_2, \iota_2, \lambda_2, \rho_2)$ are considered to be isomorphic if there is an isomorphism $\Phi: G_1 \rightarrow G_2$ of $p$-divisible groups over $S$ that is  $\mathcal{O}_L$-linear, that carries $\rho_2$ to $\rho_1$, and such that $\lambda_2$ pulls back to a $\Z_p^\times$-multiple of $\lambda_1$.
\end{definition}
By Rapoport-Zink ~\cite{RZ}, this is represented by a formal scheme $\widetilde{\mathcal{N}}^{\mathrm{inert}}_{(p,q)}$  over $\mathrm{Spf}(W)$. See Kudla-Rapoport ~\cite{local} for further details. We'll denote the underlying reduced $k$-scheme by $\mathcal{N}^{\mathrm{inert}}_{(p,q)}$.

For any integers $p,q \geq 0$ such that $p+q = m$, we can similarly define a unitary Rapoport-Zink space with action by $\Z_p \times \Z_p$ of signature $(p,q)$. Let $\psi_{\Z_p,1}$ and $\psi_{\Z_p,2}$ be the two $\Z_p$-morphisms from $\Z_p \times \Z_p$ to $\Z_p$ by projecting from the first or second factors, respectively. We will use the same notation for these two morphisms taken with values in $W$. Define a basepoint $( \boldsymbol{G}^{\mathrm{split}}_{(p,q)}, \boldsymbol{\iota}^{\mathrm{split}}_{(p,q)}, \boldsymbol{\lambda}_{(p,q)}^{\mathrm{split}})$ similar to the above, where now $\boldsymbol{\iota}^{\mathrm{split}}_{(p,q)}$ is an action of $\Z_p \times \Z_p$ meeting the \emph{local signature $(p,q)$ condition} formulated with $\psi_{\Z_p,1}$ and $\psi_{\Z_p,2}$, and together $\boldsymbol{\iota}^{\mathrm{split}}_{(p,q)}$ and $ \boldsymbol{\lambda}^{\mathrm{split}}_{(p,q)}$ satisfy the \emph{Rosati involution condition}.

\begin{definition}
Let $\widetilde{\mathcal{N}}^{\mathrm{split}}_{(p,q)}$ be the functor on $\mathrm{Nilp}_W$ that sends a connected $S$ in $\mathrm{Nilp}_W$ to the collection of isomorphism classes of quadruples $(G, \iota, \lambda, \rho)$, where 
\begin{itemize}
\item{$G$ is a supersingular $p$ divisible group of dimension $m$ and height $2m$}
\item{$\iota: \Z_p \times \Z_p \rightarrow \End(G)$ is an action of $\Z_p \times \Z_p$ meeting the \emph{local signature $(p,q)$ condition}}
\item{$\lambda: G \rightarrow G^\vee$ is a principal polarization satisfying the \emph{Rosati involution condition}}
\item{$\rho: G_{S_0} \rightarrow  (\bG^{\mathrm{split}}_{(p,q)})_{S_0}$ is an $\Z_p \times \Z_p$-linear quasi-isogeny}
\end{itemize}
such that $\rho^*(  \boldsymbol{\lambda}_{(p,q)}^{\mathrm{split}} ) = c(\rho) \lambda$ for some $c(\rho) \in \Q_p^\times$. Two quadruples $(G_1, \iota_1, \lambda_1, \rho_1)$ and $(G_2, \iota_2, \lambda_2, \rho_2)$ are considered to be isomorphic if there is an isomorphism $\Phi: G_1 \rightarrow G_2$ of $p$-divisible groups over $S$ that is  $\Z_p \times \Z_p$-linear, that carries $\rho_2$ to $\rho_1$, and such that $\lambda_2$ pulls back to a $\Z_p^\times$-multiple of $\lambda_1$.
\end{definition}
By Rapoport-Zink ~\cite{RZ}, this is represented by a formal scheme $\widetilde{\mathcal{N}}^{\mathrm{split}}_{(p,q)}$  over $\mathrm{Spf}(W)$. We'll denote the underlying reduced $k$-scheme by $\mathcal{N}^{\mathrm{split}}_{(p,q)}$ .

\section{Decomposition}
In this section, we will state and prove the main local result, that $\mathcal{N}_{( (a_i, b_i) )_{i=1}^n}$ may be described in terms of the $\mathcal{N}^{\mathrm{inert}}_{(p,q)}$ and $\mathcal{N}^{\mathrm{split}}_{(p,q)}$.

Let $\widetilde{\mathcal{N}}^{j}_{( (a_i, b_i) )_{i=1}^n}$ be the open and closed formal subscheme of $\widetilde{ \mathcal{N} }_{( (a_i, b_i) )_{i=1}^n}$ where $\ord_p( c(\rho) ) = j$, and define $\widetilde{\mathcal{N}}^{\mathrm{split},j}_{(p,q)}$ and $\widetilde{\mathcal{N}}^{\mathrm{inert},j}_{(p,q)}$ similarly. We have decompositions:
$$\widetilde{\mathcal{N}}_{( (a_i, b_i) )_{i=1}^n} = \bigsqcup_{j \in \Z}  \widetilde{\mathcal{N}}^{j}_{( (a_i, b_i) )_{i=1}^n} \quad \quad \quad \widetilde{\mathcal{N}}^{\mathrm{inert}}_{(p,q)} =  \bigsqcup_{j \in \Z}  \widetilde{\mathcal{N}}^{\mathrm{inert},j}_{(p,q)} \quad  \quad \quad \widetilde{\mathcal{N}}^{\mathrm{split}}_{(p,q)} =  \bigsqcup_{j \in \Z}  \widetilde{\mathcal{N}}^{\mathrm{split},j}_{(p,q)}.$$

\begin{theorem}\label{Decomp}
Recall that we've assumed that the primes $\mathfrak{p}_i$ are split in $E$ for $i \leq h$ and are inert in $E$ for $i > h$. There is an isomorphism
$$\widetilde{\mathcal{N} }^{j}_{( (a_i, b_i) )_{i=1}^n} \cong \widetilde{\mathcal{N}}^{\mathrm{split},j}_{(a_1,b_1)} \times \cdots \times \widetilde{\mathcal{N}}^{\mathrm{split},j}_{(a_h,b_h)} \times  \widetilde{\mathcal{N}}^{\mathrm{inert},j}_{(a_{h+1},b_{h+1})} \times \cdots \times \widetilde{\mathcal{N}}^{\mathrm{inert},j}_{(a_n,b_n)}$$ 
In the case that $h = 0$ or $h = n$, the above statement should be taken to mean
$$\widetilde{\mathcal{N} }_{( (a_i, b_i) )_{i=1}^n}^{j} \cong \widetilde{\mathcal{N}}^{\mathrm{inert},j}_{(a_{1},b_{1})} \times \cdots \times \widetilde{\mathcal{N}}^{\mathrm{inert},j}_{(a_n,b_n)}   \quad \text{ or } \quad   \widetilde{\mathcal{N} }_{( (a_i, b_i) )_{i=1}^n}^{j} \cong \widetilde{\mathcal{N}}^{\mathrm{split},j}_{(a_1,b_1)} \times \cdots \times \widetilde{\mathcal{N}}^{\mathrm{split},j}_{(a_n,b_n)},$$
respectively. The products above are all over $\mathrm{Spf}(W)$.
\end{theorem}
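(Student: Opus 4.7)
The plan is to use the product decomposition of $\mathcal{O}_{E^+}\otimes_{\Z}\Z_p\cong \Z_p^n$ at the level of idempotents in order to split any quadruple parametrized by $\widetilde{\mathcal{N}}^{j}_{((a_i,b_i))_{i=1}^n}$ into the $n$ factor problems. Let $e_1,\dots,e_n\in \mathcal{O}_{E^+}\otimes_{\Z}\Z_p$ be the orthogonal idempotents summing to $1$ and corresponding to the primes $\mathfrak{p}_1,\dots,\mathfrak{p}_n$. The crucial property is that each $e_i$ lies in $\mathcal{O}_{E^+}\otimes_{\Z}\Z_p$ and is therefore fixed by the involution $e\mapsto\overline{e}$. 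Given $(G,\iota,\lambda,\rho)\in \widetilde{\mathcal{N}}^{j}_{((a_i,b_i))_{i=1}^n}(S)$, the endomorphisms $\iota(e_1),\dots,\iota(e_n)$ of $G$ are pairwise orthogonal idempotents summing to the identity, producing a canonical direct-product decomposition $G\cong G_1\times\cdots\times G_n$ with $G_i:=\iota(e_i)G$. The residual action of $\mathcal{O}_E\otimes_{\Z}\Z_p$ on $G_i$ factors through the $i$-th factor of the decomposition recorded in Section \ref{RZdef}, which is $\Z_p\times\Z_p$ for $i\leq h$ and $\mathcal{O}_L$ for $i>h$.

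I would next check that all the extra data descend compatibly. Because $\overline{e_i}=e_i$, the Rosati condition applied to $e_i$ shows that $\lambda$ respects the decomposition and induces principal polarizations $\lambda_i:G_i\to G_i^\vee$, each satisfying its own Rosati condition. The $\mathcal{O}_E\otimes_{\Z}\Z_p$-linearity of $\rho$ forces $\rho$ to split as $\prod_i\rho_i$, provided we choose the basepoint $(\bG,\biota,\blambda)$ compatibly so that it decomposes as a product of factor basepoints (such a choice exists by a routine Dieudonn\'e-theoretic argument once the factor basepoints are fixed). The Lie algebra identity $\mathrm{Lie}(G)=\bigoplus_i \mathrm{Lie}(G_i)$ as $\mathcal{O}_E\otimes_{\Z}\Z_p$-modules, combined with the fact that the action on $\mathrm{Lie}(G_i)$ factors through the $i$-th factor of $\mathcal{O}_E\otimes_{\Z}\Z_p$, shows that the local signature $((a_i,b_i))_{i=1}^n$ condition on $G$ is equivalent to the local signature $(a_i,b_i)$ condition on each $G_i$. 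Finally, comparing $\rho^*\blambda=\prod_i\rho_i^*\blambda_i$ with $c(\rho)\lambda=\prod_i c(\rho)\lambda_i$ yields $\rho_i^*\blambda_i=c(\rho)\lambda_i$ for each $i$, so all factor scalars agree with $c(\rho)$ and the factor points each lie in the stratum with $\ord_p=j$.

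For the inverse, given an $S$-point $((G_i,\iota_i,\lambda_i,\rho_i))_{i=1}^n$ of the product with $\ord_p c(\rho_i)=j$ for all $i$, I would rescale each $\lambda_i$ by a suitable $\Z_p^\times$-unit (using the equivalence in the factor moduli) so that all $c(\rho_i)$ become equal to a common $c\in p^j\Z_p^\times$, and then form $(G,\iota,\lambda,\rho):=(\prod G_i,\prod\iota_i,\prod\lambda_i,\prod\rho_i)$. Different choices of rescaling produce left-hand side points that differ only by a global $\Z_p^\times$-scalar on $\lambda$ and hence are equivalent. To verify well-definedness on equivalence classes, suppose $(\Phi_i,u_i)$ realize a factor-by-factor equivalence of two $S$-tuples, with $\Phi_i^*\lambda_i'=u_i\lambda_i$ and $\rho_i'\circ\Phi_i=\rho_i$; applying the chain $\rho_i=\rho_i'\circ\Phi_i$ to the identity $\rho_i^*\blambda_i=c(\rho_i)\lambda_i$ forces $u_i=c(\rho_i)/c(\rho_i')$. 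After the common rescalings this ratio becomes $c/c'\in\Z_p^\times$, constant in $i$, so $\Phi:=\prod\Phi_i$ is a genuine left-hand side isomorphism with single scaling factor $u=c/c'$.

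The main obstacle is precisely this reconciliation of equivalence relations: the left-hand side permits only a single global $\Z_p^\times$-rescaling of $\lambda$, whereas the product side independently rescales each factor polarization, and a priori it is not clear that these two give rise to the same equivalence classes. The computation in the previous paragraph, leveraging the rigidity that $\rho$ imposes on the scalar ratios $c(\rho_i)/c(\rho_i')$, shows that the moduli constraints force the factor scalings to align. Once the resulting functorial bijection is in hand, it upgrades to an isomorphism of formal schemes over $\mathrm{Spf}(W)$ by representability \cite{RZ}; the degenerate cases $h=0$ and $h=n$ require no additional modification.
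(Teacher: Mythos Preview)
Your proposal is correct and follows essentially the same approach as the paper: both arguments choose the basepoint $(\bG,\biota,\blambda)$ as the product of the factor basepoints, use the idempotents of $\mathcal{O}_{E^+}\otimes_{\Z}\Z_p$ (fixed by the involution) to split $(G,\iota,\lambda,\rho)$ into its $n$ pieces, and construct the inverse by rescaling the factor polarizations so that all the scalars $c(\rho_i)$ agree. Your treatment of the equivalence-relation reconciliation is in fact slightly more explicit than the paper's, which simply observes that $\Phi\circ\Psi$ and $\Psi\circ\Phi$ land in the correct isomorphism classes without spelling out why the factor scalings must align.
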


\begin{proof}
 We will construct a natural morphism and its inverse between the $S$-points of these functors for any connected $S$ in $\mathrm{Nilp}_W$. We will use notation implying that $1 \leq h \leq n-1$, but the analogous statements are clear for $h=0$ and $h=n$. 

Recall that $\bG$ is a basepoint $p$-divisible group for $\widetilde{\mathcal{N}}_{( (a_i, b_i) )_{i=1}^n}$, which is equipped with an action  $\boldsymbol{\iota}: \mathcal{O}_E \otimes_{\Z} \Z_p \rightarrow \End(\boldsymbol{G})$ satisfying the local signature $( (a_i, b_i) )_{i=1}^n$ condition and a principal polarization  $\boldsymbol{\lambda}: \boldsymbol{G} \rightarrow \boldsymbol{G^\vee}$.   Define a new choice of basepoint $p$-divisible group $\boldsymbol{G}$ for the functor $\widetilde{\mathcal{N}}_{( (a_i, b_i) )_{i=1}^n}$ as:

$$\boldsymbol{G} = \boldsymbol{G}^{\mathrm{split}}_{(a_1,b_1)} \times_k \cdots \times_k \boldsymbol{G}^{\mathrm{split}}_{(a_h,b_h)} \times_k  \boldsymbol{G}^{\mathrm{inert}}_{(a_{h+1},b_{h+1})} \times_k \cdots \times_k \boldsymbol{G}^{\mathrm{inert}}_{(a_n,b_n)}$$

Note that, as each p-divisible group $\boldsymbol{G}^{\mathrm{split}}_{(a_i,b_i)}$ or $\boldsymbol{G}^{\mathrm{inert}}_{(a_i,b_i)}$ is supersingular of height $2m$ and dimension $m$, this  $p$-divisible group $\boldsymbol{G}$ is supersingular of height $2mn$ and dimension $mn$. 

Each of the $\boldsymbol{G}^{\mathrm{split}}_{(a_i,b_i)}$ (resp. $\boldsymbol{G}^{\mathrm{inert}}_{(a_i,b_i)}$) have their own action $\boldsymbol{\iota}^{\mathrm{split}}_{(a_i,b_i)}$ (resp. $\boldsymbol{\iota}^{\mathrm{inert}}_{(a_i,b_i)}$). Define an action $\biota$ on $
\boldsymbol{G}$ as:

$$\biota = (\boldsymbol{\iota}^{\mathrm{split}}_{(a_1,b_1)}, \cdots, \boldsymbol{\iota}^{\mathrm{split}}_{(a_h,b_h)}, \boldsymbol{\iota}^{\mathrm{inert}}_{(a_{h+1},b_{h+1})}, \cdots \boldsymbol{\iota}^{\mathrm{inert}}_{(a_n,b_n)} )$$
as the product of these actions on the respective factors of $\boldsymbol{G}$. This is a map 
$$  (\Z_p \times \Z_p)^h  \times \mathcal{O}_L^{n-h} \rightarrow \mathrm{End}(\boldsymbol{G}),$$
but through the isomorphism 
$$\mathcal{O}_E \otimes_{\Z} \Z_p \cong  (\Z_p \times \Z_p)^h  \times \mathcal{O}_L^{n-h}$$
$$e \otimes z \mapsto ( (\psi_{i,a}(e)z, \psi_{i,b}(e)z)_{i=1}^h,  (  \psi_{i,a}(e)z)_{i=h+1}^n )$$
it also defines an action of $\mathcal{O}_E \otimes_{\Z} \Z_p$ on $\boldsymbol{G}$.  As each of the actions $\boldsymbol{\iota}^{\mathrm{split}}_{(a_i,b_i)}$ and $\boldsymbol{\iota}^{\mathrm{inert}}_{(a_i,b_i)}$ satisfy their respective local signature $(a_i, b_i)$ condition, the combined action $\biota$ satisfies the local signature $( (a_i, b_i) )_{i=1}^n$ condition by construction.

Finally, we'll construct a principal polarization $\blambda$ also as a product:
$$\blambda = ( \boldsymbol{\lambda}_{(a_1,b_1)}^{\mathrm{split}} , \cdots, \boldsymbol{\lambda}_{(a_h, b_h)}^{\mathrm{split}} , \boldsymbol{\lambda}_{(a_{h+1},b_{h+1})}^{\mathrm{inert}} , \cdots, \boldsymbol{\lambda}_{(a_n,b_n)}^{\mathrm{inert}}).$$

Note that as each $ \boldsymbol{\lambda}_{(a_i,b_i)}^{\mathrm{split}}$ (resp. $\boldsymbol{\lambda}_{(a_i,b_i)}^{\mathrm{inert}}$) satisfies the Rosati involution condition for the action  $\boldsymbol{\iota}^{\mathrm{split}}_{(a_i,b_i)}$ (resp. $\boldsymbol{\iota}^{\mathrm{inert}}_{(a_i,b_i)}$), the product polarization $\blambda$ satisfies the Rosati involution condition for the product action $\biota$.

As there is a single isogeny class of such objects (see ~\cite{VW} Lemma 5.1), we may without loss of generality use this choice of $(\boldsymbol{G}, \biota, \blambda)$ as the basepoint in the definition of $\widetilde{\mathcal{N}}_{( (a_i, b_i) )_{i=1}^n}$. 

Now, consider a connected $S$ in $\mathrm{Nilp}_W$. We will construct:
$$\Phi: \widetilde{\mathcal{N} }_{( (a_i, b_i) )_{i=1}^n}^j(S) \rightarrow  \widetilde{\mathcal{N}}^{\mathrm{split},j}_{(a_1,b_1)}(S) \times \cdots \times \widetilde{\mathcal{N}}^{\mathrm{split},j}_{(a_h,b_h)}(S) \times  \widetilde{\mathcal{N}}^{\mathrm{inert},j}_{(a_{h+1},b_{h+1})}(S) \times \cdots \times \widetilde{\mathcal{N}}^{\mathrm{inert},j}_{(a_n,b_n)}(S).$$

Let $(G, \iota, \lambda, \rho) \in  \widetilde{\mathcal{N} }_{( (a_i, b_i) )_{i=1}^n}^j(S)$, and recall that we have an isomorphism:
 $$\mathcal{O}_{E^+} \otimes_{\Z} \Z_p \cong \Z_p \times  \cdots \times \Z_p$$
$$e \otimes z \mapsto (\psi_1(e) z, \dots, \psi_n(e) z),$$
so let $\epsilon_1, \cdots, \epsilon_n \in \mathcal{O}_{E^+} \otimes_{\Z} \Z_p$ be the preimages of the $n$ idempotent elements on the right-hand side. We may view $\mathcal{O}_{E^+} \otimes_{\Z} \Z_p \subset \mathcal{O}_E \otimes_{\Z} \Z_p$, so we can also view the $\epsilon_i$ as elements of $\mathcal{O}_E \otimes_{\Z} \Z_p$. The key observation is that:
$$G \cong \iota(\epsilon_1) G \times_S  \cdots \times_S \iota(\epsilon_n) G$$
where the action of $\mathcal{O}_{E^+} \otimes_{\Z} \Z_p$ acts through $\psi_i$ on $\iota(\epsilon_i)G$, and so the action $\iota$ of
$$\mathcal{O}_E \otimes_{\Z} \Z_p \cong   (\Z_p \times \Z_p)^h  \times \mathcal{O}_L^{n-h}$$
on $\iota(\epsilon_i)G$ either factors through $\Z_p \times \Z_p$ (if $i \leq h$) or through $\mathcal{O}_L$ (if $i > h$).

 Further, because $\rho$ is $\mathcal{O}_E \otimes_{\Z} \Z_p$-linear, it restricts to a quasi-isogeny:
$$\rho_i: (\iota(\epsilon_i)G)_{S_0} \rightarrow (\boldsymbol{G}^{\mathrm{split}}_{(a_i,b_i)})_{S_0}$$
for all $i \leq h$, and restricts to a quasi-isogeny:
$$\rho_i: (\iota(\epsilon_i)G)_{S_0} \rightarrow (\boldsymbol{G}^{\mathrm{inert}}_{(a_i,b_i)})_{S_0}$$
for all $i > h$. 

Finally, because $\lambda$ satisfies the Rosati involution condition, and because the automorphism $e \mapsto \overline{e}$ restricts to the identity on $\mathcal{O}_{E^+} \otimes_{\Z} \Z_p$, the polarization $\lambda$ restricts to principal polarizations:
$$\lambda_i: \iota(\epsilon_i)G  \rightarrow \iota(\epsilon_i)G^\vee$$
for all $i$. Since $\rho^*(\blambda) = c \lambda$, we have that $\rho_i^*( \boldsymbol{\lambda}_{(a_i,b_i)}^{\mathrm{split}})$ (resp. $\rho_i^*( \boldsymbol{\lambda}_{(a_i,b_i)}^{\mathrm{inert}}$)) is equal to $c \lambda_i$ for all $i$, for a fixed $c \in \Q_p^\times$ with $\ord_p(c) = j$. 

Because $(G, \iota, \lambda, \rho)$ satisfies the local signature $( (a_i, b_i ) )_{i=1}^n$ and Rosati involution conditions, we have that for all $i$ the quadruple $(\iota(\epsilon_i)G, \iota_i, \lambda_i, \rho_i)$ meets the local signature $(a_i, b_i)$ condition and the Rosati involution condition.  In particular, as $G$ is supersingular of height $2mn$ and dimension $mn$, the local signature condition implies that each $G_i$ is supersingular of height $2m$ and dimension $m$.

Finally, note that if $(G, \iota, \lambda, \rho)$ is replaced by another representative $(G', \iota', \lambda', \rho')$ in its isomorphism class, there is by definition an isomorphism $\Phi: G \rightarrow G'$ of $p$-divisible groups over $S$, that is  $\mathcal{O}_E \otimes_{\Z} \Z_p$-linear and respects the polarizations and quasi-isogenies. Precisely because this isomorphism is $\mathcal{O}_E \otimes_{\Z} \Z_p$-linear, replacing $(G, \iota, \lambda, \rho)$ with $(G', \iota', \lambda', \rho')$ in the above construction will produce quadruples isomorphic to the quadruples $(\iota(\epsilon_i)G, \iota_i, \lambda_i, \rho_i)$ for all $i$.  

Therefore, we have constructed a map:

$$\Phi: \widetilde{\mathcal{N} }_{( (a_i, b_i) )_{i=1}^n}^j(S) \rightarrow  \widetilde{\mathcal{N}}^{\mathrm{split},j}_{(a_1,b_1)}(S) \times \cdots \times \widetilde{\mathcal{N}}^{\mathrm{split},j}_{(a_h,b_h)}(S) \times  \widetilde{\mathcal{N}}^{\mathrm{inert},j}_{(a_{h+1},b_{h+1})}(S) \times \cdots \times \widetilde{\mathcal{N}}^{\mathrm{inert},j}_{(a_n,b_n)}(S) $$

$$(G, \iota, \lambda, \rho) \mapsto ( (\iota(\epsilon_i)G, \iota_i, \lambda_i, \rho_i) )_{i=1}^n.$$

Now we will construct the inverse $\Psi$ of $\Phi$ for a connected $S$.  The most important observation is that given an $n$-tuple $( (G_i, \iota_i, \lambda_i, \rho_i) )_{i=1}^n$ in 
$$ \widetilde{\mathcal{N}}^{\mathrm{split},j}_{(a_1,b_1)}(S) \times \cdots \times \widetilde{\mathcal{N}}^{\mathrm{split},j}_{(a_h,b_h)}(S) \times  \widetilde{\mathcal{N}}^{\mathrm{inert},j}_{(a_{h+1},b_{h+1})}(S) \times \cdots \times \widetilde{\mathcal{N}}^{\mathrm{inert},j}_{(a_n,b_n)}(S), $$
we have that $\rho_i^*(\boldsymbol{\lambda}_{(a_i,b_i)}^{\mathrm{split}}) = c_i \lambda_i$ (resp. $\rho_i^*(\boldsymbol{\lambda}_{(a_i,b_i)}^{\mathrm{inert}}) = c_i \lambda_i$) for all $i$, where each $c_i \in \Q_p^\times$ satisfies $\ord_p(c_i) = j$, but the $c_i$ might not all be the same element of $\Q_p^\times$. However, for all $i$, the polarization $\frac{c_i}{c_1} \lambda_i$ is again a principal polarization satisfying the necessary conditions. The identity map defines an isomorphism of quadruples:
$$(G_i, \iota_i, \lambda_i, \rho_i) \cong (G_i, \iota_i, \frac{c_i}{c_1} \lambda_i, \rho_i)$$
and $\rho_i^*(\boldsymbol{\lambda}_{(a_i,b_i)}^{\mathrm{split}}) = c_1 \frac{c_i}{c_1} \lambda_i$ (resp. $\rho_i^*(\boldsymbol{\lambda}_{(a_i,b_i)}^{\mathrm{inert}}) =  c_1 \frac{c_i}{c_1} \lambda_i$) for all $i$.

The result of the above observation is that, given an $n$-tuple $( (G_i, \iota_i, \lambda_i, \rho_i) )_{i=1}^n$, we may replace each quadruple with an isomorphic quadruple so that all quadruples satisfy  $\rho_i^*(\boldsymbol{\lambda}_{(a_i,b_i)}^{\mathrm{split}}) = c \lambda_i$ (resp. $\rho_i^*(\boldsymbol{\lambda}_{(a_i,b_i)}^{\mathrm{inert}}) = c \lambda_i$) for a single, fixed $c \in \Q_p^\times$ with $\ord_p(c) = j$. 

After making that adjustment,
$$\Psi( ( (G_i, \iota_i, \lambda_i, \rho_i) )_{i=1}^n )$$
is defined to be $(G, \iota, \lambda, \rho)$, where $G$ is the product (over $S$) of the $G_i$, the action $\iota$ is the product of the actions $\iota_i$, the polarization $\lambda$ is the product of the $\lambda_i$, and quasi-isogeny $\rho$ is the product of the $\rho_i$. 

Then $(G, \iota, \lambda, \rho)$ satisfies the local signature and Rosati involution conditions directly because the $(G_i, \iota_i, \lambda_i, \rho_i)$ satisfy their respective local signature and Rosati involution conditions, and $G$ is supersingular of height $2mn$ and dimension $mn$, since each $G_i$ is supersingular of height $2m$ and dimension $m$. Since we have $\rho_i^*( \boldsymbol{\lambda}_{(a_i,b_i)}^{\mathrm{split}})$ (resp. $\rho_i^*( \boldsymbol{\lambda}_{(a_i,b_i)}^{\mathrm{inert}}$)) equal to $c \lambda_i$ for all $i$, we have that $\rho^*(\lambda) = c \blambda$  for the product quasi-isogeny $\rho$ and the product polarization $\lambda$.

The final observation is that 
$$\Psi \circ \Phi ( G, \iota, \lambda, \rho) = (G, \iota, \lambda, \rho)$$
while 
$$\Phi \circ \Psi ( (G_i, \iota_i, \lambda_i, \rho_i) )_{i=1}^n = ( (G_i', \iota_i', \lambda_i', \rho_i') )_{i=1}^n$$
where each $(G_i, \iota_i, \lambda_i, \rho_i) \cong (G_i', \iota_i', \lambda_i', \rho_i')$. So, both compositions are the identity on isomorphism classes.

 \end{proof}

\section{Survey of Previous Results}

In the previous section, we showed that our unitary Rapoport-Zink space of local signature $( (a_i, b_i) )_{i=1}^n$ may be described in terms of Rapoport-Zink spaces parametrizing $p$-divisible groups with either an action of the ``split" quadratic extension $\Z_p \times \Z_p$ over $\Z_p$ or an action of the unramified quadratic extension $\mathcal{O}_L$ over $\Z_p$. In this section, we will record some previous results regarding the geometry of these Rapoport-Zink spaces for such quadratic extensions. 

One technical point is that given a quadruple $(G, \iota, \lambda, \rho)$ in any of these moduli spaces, there are two possible invariants of the isomorphism class of that quadruple: the height of the quasi-isogeny $\rho$, or the $\ord_p(c(\rho))$ for the element $c(\rho) \in \Q_p^\times$ comparing the polarization $\lambda$ and the pull-back of the basepoint polarization by $\rho$. We have chosen to focus on $\ord_p(c(\rho))$, while some important results are written in terms of the height of $\rho$. So first we will recall how to convert between these two invariants, as shown in Vollaard ~\cite{V} Lemma 1.7. The result is paraphrased in our notation below:

\begin{lemma}[Vollaard]\label{convert}  
Let $(G, \rho, \iota, \lambda)$ be a $k$-point of either $\widetilde{\mathcal{N}}^{inert,j}_{(p,q)}$ or $\widetilde{\mathcal{N}}^{split,j}_{(p,q)}.$ Then the height of $\rho$ is $mj$.
\end{lemma}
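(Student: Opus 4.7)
The approach is to apply the standard ``take heights'' trick to the polarization compatibility $\rho^*(\blambda)=c(\rho)\,\lambda$. Rewriting this as an equality of quasi-isogenies
\[
\rho^\vee\circ\blambda\circ\rho \;=\; c(\rho)\cdot\lambda \qquad \text{in } \Hom(G,G^\vee)\otimes\Q,
\]
I would take the height of each side and solve. The facts I need are: (i) for a quasi-isogeny $f\colon H_1\to H_2$ between $p$-divisible groups of the same height, $\mathrm{ht}(f)$ is well-defined and additive under composition; (ii) dualization preserves height, i.e.\ $\mathrm{ht}(f^\vee)=\mathrm{ht}(f)$; (iii) a principal polarization is an isomorphism, hence of height $0$; and (iv) for any $c\in\Q_p^\times$, the scalar multiplication $c\cdot\mathrm{id}_H$ is a quasi-isogeny of height $\ord_p(c)\cdot\mathrm{ht}(H)$.

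Applying (i)--(iii), the left-hand side has height $\mathrm{ht}(\rho^\vee)+\mathrm{ht}(\blambda)+\mathrm{ht}(\rho)=2\,\mathrm{ht}(\rho)$. On the right, factoring $c(\rho)\cdot\lambda = \lambda\circ(c(\rho)\cdot\mathrm{id}_G)$ and using (iii), (iv), and the fact that $\mathrm{ht}(G)=2m$, I obtain a height of $0+j\cdot 2m=2mj$. Equating the two expressions yields $\mathrm{ht}(\rho)=mj$. The argument is uniform in the inert and split cases, because neither the $\mathcal{O}_L$-action nor the $\Z_p\times\Z_p$-action on $G$ enters the computation; only the formalism of $p$-divisible groups, their duals, and principal polarizations is used.

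The only real obstacle is keeping conventions straight---the direction of $\rho$, the precise meaning of $\rho^*(\blambda)$ versus $\rho_*(\lambda)$, and the sign convention for $\mathrm{ht}$---so as to recover $+mj$ rather than $-mj$. Once these match the conventions fixed in the definitions of $\widetilde{\mathcal{N}}^{\mathrm{inert},j}_{(p,q)}$ and $\widetilde{\mathcal{N}}^{\mathrm{split},j}_{(p,q)}$, the computation above is essentially a one line verification; the substantive input is the elementary identity $\mathrm{ht}(c\cdot\mathrm{id}_G)=\ord_p(c)\cdot\mathrm{ht}(G)$ together with the symmetry $\mathrm{ht}(\rho^\vee)=\mathrm{ht}(\rho)$.
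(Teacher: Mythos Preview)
Your argument is correct and in fact more direct than what the paper does. The paper's proof simply cites Vollaard's original argument in~\cite{V}, noting that it only uses the ring decomposition $\mathcal{O}_L\otimes_{\Z_p}W\cong W\times W$, and then observes that the same decomposition $(\Z_p\times\Z_p)\otimes_{\Z_p}W\cong W\times W$ holds in the split case, so Vollaard's Dieudonn\'e-module computation carries over verbatim. Your approach bypasses Dieudonn\'e theory entirely: you work directly with the quasi-isogeny identity $\rho^\vee\circ\blambda\circ\rho=c(\rho)\lambda$ and read off the height from the elementary facts (i)--(iv). This has the advantage of being manifestly uniform in the inert and split cases without ever invoking the action of $\mathcal{O}_L$ or $\Z_p\times\Z_p$, whereas the paper's route requires checking that the relevant ring-theoretic input matches in both cases. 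The trade-off is that Vollaard's Dieudonn\'e-module argument sits inside a framework that yields finer information elsewhere in~\cite{V}, while your computation is self-contained but specific to this one height identity.
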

\begin{proof}
In ~\cite{V}, Vollaard proves this for $(G, \rho, \iota, \lambda) \in \widetilde{\mathcal{N}}^{inert,j}_{(p,q)}(k)$, but the proof only uses that 
$$\mathcal{O}_L \otimes_{\Z_p} W \cong W \times W.$$
Using the fact that
$$(\Z_p \times \Z_p) \otimes_{\Z_p} W \cong W \times W,$$
the exact same proof holds for $(G, \rho, \iota, \lambda) \in \widetilde{\mathcal{N}}^{split,j}_{(p,q)}(k)$.
\end{proof}

In particular, the open and closed formal subscheme of one of these unitary Rapoport-Zink spaces where $\ord_p(c(\rho)) = j$ may be identified with the open and closed formal subscheme where $\mathrm{height}(\rho) = mj$.

\begin{proposition}\label{empty}
If the functor $\widetilde{\mathcal{N}}^{\mathrm{split}}_{(p,q)}$ is non-empty, we must have $p=q$. 
\end{proposition}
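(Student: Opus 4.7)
The plan is to use the two orthogonal idempotents of $\Z_p \times \Z_p$ to split $G$ into two factors, and then combine the local signature condition, the Rosati condition, and the supersingularity hypothesis to force $p=q$. Concretely: assume $\widetilde{\mathcal{N}}^{\mathrm{split}}_{(p,q)}$ is non-empty, fix a $k$-point $(G, \iota, \lambda, \rho)$, and let $e_1 = (1,0)$ and $e_2 = (0,1)$ in $\Z_p \times \Z_p$. The elements $\iota(e_1), \iota(e_2) \in \End(G)$ are orthogonal idempotents summing to the identity, yielding a product decomposition $G \cong G_1 \times G_2$ with $G_i := \iota(e_i)G$. Substituting $e_1$ into the local signature $(p,q)$ condition, together with $\psi_{\Z_p,1}(e_1) = 1$ and $\psi_{\Z_p,2}(e_1) = 0$, gives the characteristic polynomial of $\iota(e_1)$ on $\mathrm{Lie}(G)$ as $(T-1)^p T^q$, so the $1$-eigenspace $\mathrm{Lie}(G_1)$ has dimension $p$, and symmetrically $\dim G_2 = q$.

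Next I would use the Rosati condition to compute heights. The involution $e \mapsto \overline{e}$ of $\Z_p \times \Z_p$ swaps $e_1$ and $e_2$, so the identity $\lambda \circ \iota(e_2) = \iota(e_1)^\vee \circ \lambda$ forces $\lambda$ to restrict to a morphism $G_2 \to G_1^\vee$, and symmetrically $G_1 \to G_2^\vee$. Because $\lambda$ is principal, both restrictions must be isomorphisms. Duality preserves height while swapping dimension with codimension, so combining $G_1 \cong G_2^\vee$ with the dimensions just computed gives $\mathrm{height}(G_1) = \mathrm{height}(G_2) = p+q = m$.

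Finally, $G$ is supersingular, so every slope of its Newton polygon equals $1/2$. The Newton polygon of a product decomposition is the concatenation of those of the factors, so each $G_i$ is itself supersingular of height $m$, and its average slope $\dim G_i / \mathrm{height}(G_i)$ must equal $1/2$. Applied to $G_1$ this gives $p/m = 1/2$, i.e.\ $p = q$, as claimed. The only subtle step is extracting the duality $G_1 \cong G_2^\vee$ from the Rosati condition, which requires carefully tracing how the dual isogeny $\iota(e_1)^\vee$ interacts with the decomposition of $G^\vee$ induced by $\iota^\vee$; the rest is routine bookkeeping with idempotents and Newton polygons.
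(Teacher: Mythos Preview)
Your argument is correct and follows essentially the same route as the paper's proof: both decompose $G$ via the two idempotents of $\Z_p\times\Z_p$, read off $\dim G_1=p$ and $\dim G_2=q$ from the signature condition, use the Rosati condition together with principality of $\lambda$ to obtain $G_1\cong G_2^\vee$, and then invoke supersingularity to conclude $p=q$. The only cosmetic difference is the order of the last two steps: the paper first uses supersingularity to get $\mathrm{height}(G_i)=2\dim G_i$ and then the duality $G_1\cong G_2^\vee$ to equate heights, whereas you first use the duality (plus ``dual swaps dimension and codimension'') to get both heights equal to $m$ and then supersingularity to force $p=m/2$; either ordering works.
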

\begin{proof}
Assume that the functor $\widetilde{\mathcal{N}}^{\mathrm{split}}_{(p,q)}$ is nonempty. Then there must be a basepoint triple $(\boldsymbol{G}^{\mathrm{split}}_{(p,q)},  \biota^{\mathrm{split}}_{(p,q)}, \boldsymbol{\lambda}_{(p,q)}^{\mathrm{split}})$ over $k$, satisfying the $(p,q)$ signature condition and the Rosati involution condition.  Because $\boldsymbol{G}^{\mathrm{split}}_{(p,q)}$ has an action of $\Z_p \times \Z_p$, we have an isomorphism of $p$-divisible groups over $k$:
$$\boldsymbol{G}^{\mathrm{split}}_{(p,q)} \cong \boldsymbol{G}_1 \times \boldsymbol{G}_2$$
where $\Z_p \times \Z_p$ acts through the first $\Z_p$-factor on $\boldsymbol{G}_1$ and the second on $\boldsymbol{G}_2$. 

Because $\boldsymbol{G}^{\mathrm{split}}_{(p,q)}$ satisfies the signature $(p,q)$ condition, we must have that $\boldsymbol{G}_1$ has dimension $p$ and that $\boldsymbol{G}_2$ has dimension $q$. Since $\boldsymbol{G}^{\mathrm{split}}_{(p,q)}$ is supersingular, both $\boldsymbol{G}_1$ and $\boldsymbol{G}_2$ are supersingular, so they are of height $2p$ and $2q$, respectively.

We have assumed that $\boldsymbol{\lambda}_{(p,q)}^{\mathrm{split}}$ is a principal polarization and that $\biota^{\mathrm{split}}_{(p,q)}$ and $\boldsymbol{\lambda}_{(p,q)}^{\mathrm{split}}$ satisfy the Rosati involution condition, so $\boldsymbol{\lambda}_{(p,q)}^{\mathrm{split}}$ defines isomorphisms:
$$\boldsymbol{\lambda}_{(p,q)}^{\mathrm{split}}: \boldsymbol{G}_1 \rightarrow (\boldsymbol{G}_2)^\vee \quad \quad \boldsymbol{\lambda}_{(p,q)}^{\mathrm{split}}: \boldsymbol{G}_2 \rightarrow (\boldsymbol{G}_1)^\vee.$$

In particular, 
$$\mathrm{height}( \boldsymbol{G}_1 ) = \mathrm{height}( (\boldsymbol{G}_1)^\vee ) = \mathrm{height}( \boldsymbol{G}_2 )$$
and so we have that $p=q$. 
\end{proof}
Note that by the above proposition, if $\widetilde{\mathcal{N}}^{\mathrm{split}}_{(p,q)}$ is nonempty then $m = p + q$ must be even. 

\begin{proposition}\label{11split}
The reduced $k$-scheme $\mathcal{N}^{split,j}_{(1,1)}$ is equi-dimensional of dimension 0.
\end{proposition}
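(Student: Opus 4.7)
The plan is to exploit the $\Z_p \times \Z_p$-action to split the $p$-divisible groups into a product, and then observe that each factor is a one-dimensional, height-two supersingular $p$-divisible group whose Rapoport-Zink moduli is classical.

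Concretely, let $(G, \iota, \lambda, \rho)$ be an $S$-point of $\widetilde{\mathcal{N}}^{\mathrm{split},j}_{(1,1)}$ for a connected $S \in \mathrm{Nilp}_W$. The orthogonal idempotents $\epsilon_1 = (1,0)$ and $\epsilon_2 = (0,1) \in \Z_p \times \Z_p$ produce a decomposition
$$G \cong \iota(\epsilon_1) G \times_S \iota(\epsilon_2) G =: G_1 \times_S G_2,$$
and likewise decompose $\boldsymbol{G}^{\mathrm{split}}_{(1,1)}$, the action $\iota$, and the quasi-isogeny $\rho$. The local signature $(1,1)$ condition forces $\dim G_i = 1$ for $i = 1, 2$; combined with supersingularity and the total height constraint $\mathrm{ht}(G) = 4$, each $G_i$ is a one-dimensional supersingular $p$-divisible group of height $2$.

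Next I would analyze the polarization. Because $(e_1, e_2) \mapsto (e_2, e_1)$ under conjugation in $\Z_p \times \Z_p$, the Rosati involution condition forces $\lambda$ to send $G_1$ into $G_2^\vee$ and $G_2$ into $G_1^\vee$; since $\lambda$ is principal, it yields a canonical isomorphism $\lambda_{12} : G_1 \xrightarrow{\sim} G_2^\vee$, and similarly for the quasi-isogenies via $\rho^*\blambda = c(\rho)\lambda$. Consequently $(G_2, \rho_2)$ is determined (up to canonical isomorphism) by $(G_1, \rho_1)$, and the moduli problem $\widetilde{\mathcal{N}}^{\mathrm{split},j}_{(1,1)}$ is equivalent, on connected base schemes, to the moduli problem of pairs $(G_1, \rho_1)$ where $G_1$ is a one-dimensional height-two supersingular $p$-divisible group and $\rho_1 : (G_1)_{S_0} \to (\boldsymbol{G}_1)_{S_0}$ is a quasi-isogeny of height $j$ (using Lemma \ref{convert} to pin down the height).

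Finally I would invoke the classical fact that this latter moduli space, the height-zero stratum of the Lubin-Tate / Rapoport-Zink formal scheme for a one-dimensional formal $p$-divisible group of height two (see \cite{RZ}), is isomorphic to $\mathrm{Spf}(W)$: a one-dimensional supersingular $p$-divisible group of height two over $k$ is unique up to isomorphism, and its deformation theory combined with the rigidity of quasi-isogenies produces exactly one $k$-point in each height stratum. Hence $\mathcal{N}^{\mathrm{split},j}_{(1,1)}$ is reduced to a single point, which is in particular equi-dimensional of dimension $0$. The only step that involves any subtlety is verifying that the polarization datum is correctly absorbed into the duality identification $G_2 \cong G_1^\vee$ so that no continuous parameters are introduced; this is a straightforward bookkeeping check using the Rosati involution condition and that $c(\rho) \in \Q_p^\times$ is a scalar rather than moduli.
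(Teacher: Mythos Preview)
Your approach is essentially the paper's: the paper simply asserts (citing an argument analogous to \cite{GL4}) that $\mathcal{N}^{\mathrm{split},j}_{(1,1)}$ is isomorphic to the underlying reduced scheme of the $\GL_2$ Rapoport-Zink space, which is classically zero-dimensional; your idempotent splitting together with the polarization identification $G_2 \cong G_1^\vee$ is exactly the mechanism behind that reduction.

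One small correction that does not affect the conclusion: the height-$j$ piece of the $\GL_2$ Rapoport-Zink formal scheme is $\mathrm{Spf}(W[[u]])$, not $\mathrm{Spf}(W)$ (the Lubin-Tate deformation space for a one-dimensional formal group of height $2$ has one formal parameter). The underlying reduced scheme is still $\mathrm{Spec}(k)$, so zero-dimensionality follows as you claim, but $\widetilde{\mathcal{N}}^{\mathrm{split},j}_{(1,1)}$ is not formally \'etale over $W$.
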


There are  several ways to see that $\mathcal{N}^{split,j}_{(1,1)}$ is zero-dimensional. For example, by an argument analogous to  of ~\cite{GL4}, one can see that $\mathcal{N}^{split,j}_{(1,1)}$ is isomorphic to the underlying reduced scheme of the $\mathrm{GL}_2$ Rapoport-Zink space (the moduli space of supersingular $p$-divisible groups of height 1 and dimension 2, with a quasi-isogeny to a fixed basepoint), which is classically known to be zero-dimensional.

The following description of $\mathcal{N}^{split,j}_{(2,2)}$ is paraphrased from Theorem 7.3 of ~\cite{GL4}.

\begin{theorem}\label{22split} 
The reduced $k$-scheme $\mathcal{N}^{split,j}_{(2,2)}$ is equi-dimensional of dimension 1. Each irreducible component of $\mathcal{N}^{split,j}_{(2,2)}$ is isomorphic over $k$ to $\mathbb{P}^1$, and the irreducible components intersect as follows: there are $p^2 + 1$ intersection points on each irreducible component, and $p^2 + 1$ distinct irreducible components pass through each intersection point. 

As any two irreducible components intersect trivially or intersect in a single point, for a fixed irreducible component $X$  there are $p^2(p^2 +1)$ distinct irreducible components $X'$ such that 
$$X \cap X'$$
is a single point.

\end{theorem}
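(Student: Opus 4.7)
The plan is to follow the approach of \cite{GL4}, of which this is essentially Theorem 7.3. First I would exploit the idempotent decomposition used in the proof of Proposition \ref{empty}: the two idempotents of $\Z_p \times \Z_p$ split any $S$-valued quadruple as a pair $(G_1, G_2)$ of supersingular $p$-divisible groups on which $\Z_p \times \Z_p$ acts through its first and second projections respectively, and the principal polarization identifies $G_2 \cong G_1^\vee$. The local signature $(2,2)$ condition forces each $G_i$ to have dimension $2$ and (by supersingularity) height $4$. After fixing this duality, $\widetilde{\mathcal{N}}^{\mathrm{split}}_{(2,2)}$ is identified with the moduli space of supersingular $p$-divisible groups of height $4$ and dimension $2$ together with a quasi-isogeny to a fixed basepoint, i.e.\ with (a component of) the underlying reduced scheme of the height $4$, dimension $2$ Rapoport--Zink space for $\GL_4$.

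Next I would pass to covariant Dieudonn\'e theory to convert this into a lattice problem: an $S$-point corresponds to a Dieudonn\'e lattice $M$ inside the rational Dieudonn\'e module $N$ of the basepoint, with $M$ self-dual (up to the scaling $c(\rho)$) and satisfying a dimension-$2$ Hodge condition. Following the Vollaard--Wedhorn style analysis I would stratify by the relative position of $M$ to $V M + p M$ (equivalently, by a vertex in a simplicial complex built from the Hermitian space over $\Q_{p^2}$ attached to $N$), and show that the closed stratum attached to each vertex is a Deligne--Lusztig variety for $\SL_2 / \F_{p^2}$ of Coxeter type, hence isomorphic to $\mathbb{P}^1$ with exactly $p^2+1$ special $\F_{p^2}$-points. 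These $\mathbb{P}^1$s exhaust the irreducible components, giving equidimensionality of dimension $1$. The incidence of components is then read off from the edges of the complex: two components meet iff the corresponding vertices are adjacent, and in that case they meet transversely in the single common special point. Both the $p^2+1$ count (intersection points on a fixed component, and components through a fixed intersection point) and the product $p^2(p^2+1)$ drop out: $p^2+1$ is the number of $\F_{p^2}$-lines in a $2$-dimensional $\F_{p^2}$-vector space, and through each of the $p^2+1$ special points of a fixed $X$ pass exactly $p^2$ other components, with no two components meeting twice.

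The main obstacle, if one were to redo this rather than cite \cite{GL4}, is the Dieudonn\'e-theoretic computation that each stratum really is a Coxeter-type Deligne--Lusztig variety, and in particular that no higher-dimensional strata arise. This is analogous to the arguments of \cite{V} and \cite{VW} in the inert case, but must be redone with the split Dieudonn\'e module $M \otimes_{\Z_p} W \cong W \times W$ in place of $\mathcal{O}_L \otimes_{\Z_p} W$; the analogy extends far enough that essentially the same lattice-counting arguments apply, which is precisely what is carried out in \cite{GL4}.
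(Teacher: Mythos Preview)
Your proposal is correct and matches what the paper does: the paper gives no independent proof here but simply records this as a paraphrase of Theorem~7.3 of \cite{GL4}, and your sketch is exactly the strategy of that reference (idempotent splitting to reduce to a $\GL_4$-type Rapoport--Zink space, then Dieudonn\'e-theoretic lattice analysis and identification of strata with Deligne--Lusztig varieties). One small terminological slip: in the split case the auxiliary linear-algebraic datum is not a Hermitian space over $\Q_{p^2}$ but rather the analogue for the split extension $\Q_p \times \Q_p / \Q_p$ (equivalently, a pair of dual $\Q_p$-spaces), so the combinatorial model is the building for $\GL$ rather than for a genuine unitary group; the counting still gives $p^2+1$ because the relevant residue-field objects live over $\F_{p^2}$, as you note.
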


 \begin{proposition}\label{0minert}
If $mj$ is odd, the reduced $k$-scheme $\mathcal{N}^{inert,j}_{(0,m)}$ is empty. If $mj$ is even, $\mathcal{N}^{inert,j}_{(0,m)}$ is equi-dimensional of dimension 0.
\end{proposition}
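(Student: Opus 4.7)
The plan is to split the statement into a dimension computation (via a local model argument) and a parity-based emptiness criterion (via Dieudonn\'e modules).

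For the zero-dimensionality, I would argue via the Rapoport-Zink local model. The signature $(0,m)$ condition demands that the Hodge filtration inside $\mathrm{Lie}(G)$ coincide with the entire $\psi_{L,2}$-isotypic component, a uniquely specified subspace of the tangent bundle. Thus the local model of $\widetilde{\mathcal{N}}^{\mathrm{inert}}_{(0,m)}$ over $\mathrm{Spf}(W)$ is a single formal point, and by the local model diagram of Rapoport-Zink the formal scheme $\widetilde{\mathcal{N}}^{\mathrm{inert}}_{(0,m)}$ is formally \'etale over $\mathrm{Spf}(W)$. Its underlying reduced $k$-scheme is therefore a disjoint union of closed points, and the same property is inherited by each open and closed piece $\mathcal{N}^{\mathrm{inert},j}_{(0,m)}$.

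For the parity obstruction, I would pass to rational Dieudonn\'e modules. Let $N$ denote the rational Dieudonn\'e module of $\boldsymbol{G}^{\mathrm{inert}}_{(0,m)}$, and decompose $N = N_1 \oplus N_2$ using the two idempotents of $\mathcal{O}_L \otimes_{\Z_p} W_\Q \cong W_\Q \times W_\Q$. A $k$-point of $\widetilde{\mathcal{N}}^{\mathrm{inert},j}_{(0,m)}$ yields an $F$-, $V$-, and $\mathcal{O}_L \otimes_{\Z_p} W$-stable lattice $M = M_1 \oplus M_2 \subset N$, and the signature $(0,m)$ condition forces $M_1 = FM_2$. Any $\mathcal{O}_L \otimes W$-linear quasi-isogeny $\rho$ decomposes as a pair $(a,b)$ acting on $N_1 \oplus N_2$, and the constraint $M_1 = FM_2$ together with the corresponding relation $\boldsymbol{M}_1 = F\boldsymbol{M}_2$ for the basepoint forces $\ord_p(\det a) = \ord_p(\det b)$, since $F \colon N_2 \to N_1$ is a $\sigma$-linear isomorphism identifying $\boldsymbol{M}_2$ with $\boldsymbol{M}_1$. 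Summing the two valuations yields the total height $\mathrm{height}(\rho) = mj$, so each equals $mj/2$, forcing $mj$ to be even. Conversely, when $mj$ is even one constructs a $k$-point by rescaling the basepoint lattice by an appropriate $p$-power, exhibiting the nonemptiness.

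The main obstacle will be justifying the equality $\ord_p(\det a) = \ord_p(\det b)$ with sufficient care: it rests on a precise identification of $F \colon N_2 \to N_1$ as a $\sigma$-linear isomorphism compatible with the polarization and the basepoint lattice, together with verifying that $\mathcal{O}_L \otimes W$-equivariance of $\rho$ really produces a diagonal decomposition $(a,b)$. Once this is in place, the parity argument combined with the local model statement yields both claims of the proposition.
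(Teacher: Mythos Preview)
Your proposal is correct and follows the same mathematical route as the paper, just at a different level of detail: the paper's proof simply cites Kudla--Rapoport for the statement that $\widetilde{\mathcal{N}}^{\mathrm{inert}}_{(0,m)}$ is formally smooth of relative dimension $0$ over $\mathrm{Spf}(W)$ (your local model argument is precisely what underlies this), and cites Vollaard for the parity obstruction (your Dieudonn\'e-module computation is essentially the argument found there). Your identification of the key technical point---that $F$-equivariance of $\rho$ together with the relation $M_1 = FM_2$ forced by signature $(0,m)$ yields equal contributions to the height from the two eigenspaces---is exactly right, and once combined with Lemma~\ref{convert} (height $= mj$) gives the evenness of $mj$.
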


\begin{proof}
By Kudla-Rapoport, $\widetilde{\mathcal{N}}^{inert}_{(0,m)}$ is formally smooth of dimension 0, in the sense that the completed local ring at any $k$-point of $\widetilde{\mathcal{N}}^{inert}_{(0,m)}$ is isomorphic to $W$. It follows (for example, from ~\cite{formal}) that the underlying reduced scheme $\mathcal{N}^{inert}_{(0,m)}$ is zero-dimensional as a $k$-scheme. In ~\cite{V}, Vollaard shows that the open and closed subschemes $\mathcal{N}^{inert,j}_{(0,m)}$ are only nonempty when $mj$ is even. So, the reduced $k$-schemes $\mathcal{N}^{inert,j}_{(0,m)}$ are zero-dimensional when $mj$ is even and are empty otherwise.

\end{proof}

The following description of $\mathcal{N}^{inert,j}_{(1,1)}$ follows from Corollary C of Vollaard-Wedhorn ~\cite{VW}: 

 \begin{proposition}\label{11inert}
For all $j$, the reduced $k$-scheme  $\mathcal{N}^{inert,j}_{(1,1)}$ is equi-dimensional of dimension 0.
\end{proposition}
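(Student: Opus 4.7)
The plan is to deduce this statement directly from the main theorem of Vollaard-Wedhorn \cite{VW} by specializing their general description of $\mathcal{N}^{inert,j}_{(1,n-1)}$ to the case $n=2$. The signature $(1,1)$ considered here is the smallest case of the signature $(1,n-1)$ setting treated in \emph{loc.\ cit.}, where the authors prove that each open and closed stratum (indexed by the height of the quasi-isogeny) is nonempty and equi-dimensional of dimension $\lfloor (n-1)/2 \rfloor$, with irreducible components realized as closed Deligne-Lusztig varieties in a certain partial flag variety over $\overline{\F}_p$. For $n=2$ this dimension formula yields $\lfloor 1/2 \rfloor = 0$, and the relevant closed Deligne-Lusztig varieties degenerate to finite collections of points, giving the desired equi-dimensional zero-dimensional picture.

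Concretely, my first step would be to invoke Corollary C of \cite{VW} applied with $n=2$, obtaining that the stratum of $\widetilde{\mathcal{N}}^{inert}_{(1,1)}$ where the height of the quasi-isogeny $\rho$ equals a fixed integer is equi-dimensional of dimension zero on the underlying reduced scheme. Second, I would use Lemma \ref{convert} to translate between the two natural indexings: for $m=2$, the open and closed subscheme where $\ord_p(c(\rho)) = j$ coincides with the open and closed subscheme where $\mathrm{height}(\rho) = 2j$. This identification shows that the integer parameter $j$ used in the definition of $\mathcal{N}^{inert,j}_{(1,1)}$ runs through the same indexing family as the height parameter appearing in the Vollaard-Wedhorn stratification, so the conclusion transfers without loss.

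The main point requiring attention will simply be bookkeeping of conventions, namely matching the signature condition as formulated here (in terms of the embeddings $\psi_{L,1}, \psi_{L,2}$) with that of Vollaard-Wedhorn, and reconciling the indexing by $\ord_p(c(\rho))$ with their indexing by $\mathrm{height}(\rho)$; both issues are resolved by the preceding lemma. Once the translation is in place, the zero-dimensionality is immediate from \cite{VW}, so the argument is essentially a citation rather than a new computation.
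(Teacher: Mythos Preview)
Your proposal is correct and follows essentially the same route as the paper: the proposition is stated in the paper as a direct consequence of Corollary~C of Vollaard--Wedhorn \cite{VW}, specialized to $n=2$. Your added remark about using Lemma~\ref{convert} to reconcile the indexing by $\ord_p(c(\rho))$ with Vollaard--Wedhorn's indexing by $\mathrm{height}(\rho)$ is exactly the translation the paper performs (implicitly here, explicitly for Theorems~\ref{12inert} and~\ref{13inert}), so there is no divergence in method.
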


The following description of $\mathcal{N}^{inert,j}_{(1,2)}$ is given by Vollaard in Theorem 5.16 of ~\cite{V}. After using Lemma \ref{convert} to present this theorem in our notation, the result is paraphrased below:

\begin{theorem}[Vollaard]\label{12inert} 
For $j$ odd, the reduced $k$-scheme $\mathcal{N}^{inert,j}_{(1,2)}$ is empty. For $j$ even, the reduced $k$-scheme $\mathcal{N}^{inert,j}_{(1,2)}$ is equi-dimensional of dimension 1. Each irreducible component of $\mathcal{N}^{inert,j}_{(1,2)}$ is isomorphic over $k$ to the Fermat curve:
$$C \ : \ x_0^{p+1} + x_1^{p+1} + x_2^{p+1} = 0 \subset \mathbb{P}^2_k.$$
There are $p^3 + 1$ intersection points on each irreducible component, and  $p + 1$ distinct irreducible components pass through each intersection point. 

As any two irreducible components intersect trivially or intersect in a single point, for a fixed irreducible component $X$  there are $p(p^3 + 1)$ distinct irreducible components $X'$ such that 
$$X \cap X'$$
is a single point.
\end{theorem}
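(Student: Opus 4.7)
The plan is to obtain Theorem \ref{12inert} as a direct translation of Vollaard's Theorem 5.16 of \cite{V} into the indexing convention used in this paper. Vollaard works with the open and closed subschemes cut out by fixing the height of the quasi-isogeny $\rho$, whereas we have fixed the discrete invariant $j = \ord_p(c(\rho))$, so the first task is merely bookkeeping.

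First, I would apply Lemma \ref{convert}: for a $k$-point of $\widetilde{\mathcal{N}}^{\mathrm{inert},j}_{(1,2)}$ we have $m=3$, so the height of $\rho$ equals $3j$. Vollaard's results (phrased in terms of height) show that the relevant subscheme is empty unless the height of $\rho$ is even, which via the identity $\mathrm{height}(\rho) = 3j$ becomes the condition that $3j$ is even, i.e.\ that $j$ is even. This yields the emptiness claim for $j$ odd. For $j$ even, the corresponding height-$3j$ stratum in Vollaard's setup is non-empty and is the object for which Theorem 5.16 of \cite{V} gives a complete description.

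Next, I would transport the geometric conclusions one-for-one. Vollaard shows that the stratum is equi-dimensional of dimension $1$, that every irreducible component is isomorphic over $k$ to the Fermat curve $C : x_0^{p+1} + x_1^{p+1} + x_2^{p+1} = 0$ in $\mathbb{P}^2_k$, and that the intersection combinatorics of components is governed by a Bruhat--Tits building: each component carries $p^3+1$ intersection points, and $p+1$ components meet at each such point, with any two distinct components either disjoint or meeting in a single point. Each of these statements is an assertion about the reduced scheme underlying the height-$3j$ stratum, and none of them depends on the labeling of the discrete invariant, so they transfer verbatim to $\mathcal{N}^{\mathrm{inert},j}_{(1,2)}$.

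Finally, the corollary about the number of components meeting a fixed component $X$ in a single point is a purely combinatorial consequence of the two counts just recalled: for each of the $p^3+1$ intersection points on $X$ there are $p$ further components through it, giving $p(p^3+1)$ components $X'$ with $X \cap X'$ a single point. I do not expect any genuine obstacle here; the only thing to be careful about is the parity correspondence between Vollaard's height index and our $j$, which is handled cleanly by Lemma \ref{convert}. The substantive content of the theorem is entirely due to \cite{V}, and the proof in this paper amounts to citation plus the conversion of indices.
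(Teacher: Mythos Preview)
Your proposal is correct and matches the paper's approach exactly: the paper does not give an independent proof but simply cites Vollaard's Theorem~5.16 in \cite{V} and invokes Lemma~\ref{convert} to translate the height indexing into the $j = \ord_p(c(\rho))$ indexing. Your additional remark deriving the count $p(p^3+1)$ from the two incidence numbers is a minor elaboration not spelled out in the paper, but it is consistent with how the analogous counts are used downstream.
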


The following description of $\mathcal{N}^{inert,j}_{(1,3)}$ is given by Vollaard-Wedhorn in Example 4.8  of~\cite{VW}. After using Lemma \ref{convert} to convert their statement into our notation, their result is paraphrased below:
\begin{theorem}[Vollaard-Wedhorn]\label{13inert} \
The reduced $k$-scheme $\mathcal{N}^{inert,j}_{(1,3)}$ is equi-dimensional of dimension 1. Each irreducible component of $\mathcal{N}^{inert,j}_{(1,3)}$ is isomorphic over $k$ to the Fermat curve:
$$C \ : \ x_0^{p+1} + x_1^{p+1} + x_2^{p+1} = 0 \subset \mathbb{P}^2_k.$$
There are $p^3 + 1$ intersection points on each irreducible component, and  $p^3 + 1$ distinct irreducible components pass through each intersection point. 

As any two irreducible components intersect trivially or intersect in a single point, for a fixed irreducible component $X$  there are $p^3(p^3 + 1)$ distinct irreducible components $X'$ such that 
$$X \cap X'$$
is a single point.
\end{theorem}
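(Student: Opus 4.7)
My plan is to obtain this result essentially by direct appeal to Vollaard--Wedhorn's Example 4.8 in \cite{VW}, with Lemma \ref{convert} handling the translation between the two natural invariants on the Rapoport--Zink space.

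The first step is a notational alignment. Vollaard--Wedhorn stratify $\widetilde{\mathcal{N}}^{\mathrm{inert}}_{(1,n-1)}$ by connected components indexed by the height of the quasi-isogeny $\rho$, not by $\ord_p(c(\rho))$. For $m=4$, Lemma \ref{convert} identifies the locus $\ord_p(c(\rho)) = j$ with the locus $\mathrm{height}(\rho) = 4j$. Once this identification is in place, every feature of the geometry (equi-dimensionality, components isomorphic to $C$, intersection numbers) transfers directly from Vollaard--Wedhorn's indexing to ours. Note that unlike the $(1,2)$ case of Theorem \ref{12inert}, there is no parity obstruction in the $(1,3)$ case: because $mj = 4j$ is even for every integer $j$, the height condition is automatically satisfied, so every $\mathcal{N}^{\mathrm{inert},j}_{(1,3)}$ is non-empty.

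The second step is to unpack what Vollaard--Wedhorn prove. Their main theorem gives a stratification of $\mathcal{N}^{\mathrm{inert}}_{(1,n-1)}$ indexed by the vertices and simplices of the Bruhat--Tits building of the relevant unitary group over $\Q_p$, with each closed stratum realized as a Deligne--Lusztig variety for a parahoric subgroup. For $n=4$ and the top-dimensional closed stratum, the Deligne--Lusztig variety in question is the Fermat curve $C \colon x_0^{p+1} + x_1^{p+1} + x_2^{p+1} = 0$ in $\mathbb{P}^2_k$. The numerical invariants $p^3 + 1$ (number of intersection points on each component, and number of components through each intersection point) come from counting isotropic lines in a $3$-dimensional hermitian space over $\F_{p^2}$, which is the standard combinatorial input governing the adjacency of vertices in the building; the count $p^3(p^3+1)$ of distinct components sharing a point with a fixed component $X$ is an immediate consequence.

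The step I would expect to be the main obstacle, if one were to reprove the result from scratch rather than cite it, is identifying the closed strata as Fermat curves and computing the intersection pattern: this requires an explicit Dieudonné-theoretic description of the BT-stratification and knowledge of which Deligne--Lusztig varieties arise. Since all of this is already carried out in \cite{VW}, Example 4.8, the proof reduces to the two points above, namely the citation together with the height-versus-$\ord_p(c(\rho))$ conversion supplied by Lemma \ref{convert}.
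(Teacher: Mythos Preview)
Your proposal is correct and matches the paper's own treatment: the paper does not give a proof of this theorem but simply cites Example 4.8 of \cite{VW} together with Lemma \ref{convert} to translate from their height-of-$\rho$ indexing to the $\ord_p(c(\rho))$ indexing used here. Your additional remarks (absence of a parity obstruction since $mj=4j$ is always even, and the sketch of the building/Deligne--Lusztig picture behind the cited result) are accurate context but go beyond what the paper records.
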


The following description of $\mathcal{N}^{inert,j}_{(2,2)}$ is given in Theorem 3.10 and Theorem 3.12  of Howard-Pappas ~\cite{GU22}, and is paraphrased here in our notation:
\begin{theorem}[Howard-Pappas]\label{22inert} 
The reduced $k$-scheme $\mathcal{N}^{inert,j}_{(2,2)}$ is equi-dimensional of dimension 2.  Each irreducible component of $\mathcal{N}^{inert,j}_{(2,2)}$ is isomorphic over $k$ to the Fermat surface:
$$S : \  x_0^{p+1} + x_1^{p+1} + x_2^{p+1} + x_3^{p+1} = 0 \subset \mathbb{P}^3_k.$$
For a fixed irreducible component $X$  there are $(p^3 + 1)(p^2 + 1)$ distinct irreducible components $X'$ such that 
$$X \cap X'$$
is a single point and $(p^3 + 1)(p+1)$ distinct irreducible components $X'$ such that 
$$X \cap X'$$
is a projective line.
\end{theorem}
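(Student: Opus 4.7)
The proof plan is essentially a citation and translation of notation from Howard--Pappas \cite{GU22}. The Rapoport--Zink space studied in \cite{GU22} parametrizes supersingular $p$-divisible groups of height $4$ and dimension $2$ equipped with an $\mathcal{O}_L$-action of signature $(2,2)$, a principal polarization satisfying the Rosati involution condition, and a framing quasi-isogeny to a fixed basepoint. This is exactly the moduli problem defining our $\widetilde{\mathcal{N}}^{\mathrm{inert}}_{(2,2)}$, and so on the level of functors the two spaces coincide. The only subtlety is bookkeeping: Howard--Pappas index the open and closed pieces of their Rapoport--Zink space by the height of the quasi-isogeny $\rho$, whereas we index by $\ord_p(c(\rho))$.

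The first step is to invoke Lemma \ref{convert} with $m = 4$, which tells us that on $\widetilde{\mathcal{N}}^{\mathrm{inert},j}_{(2,2)}$ the quasi-isogeny has height $4j$. Thus our stratum $\mathcal{N}^{\mathrm{inert},j}_{(2,2)}$ is literally the height-$4j$ stratum in the notation of \cite{GU22}. Once this identification is in place, Theorem 3.10 of \cite{GU22} immediately yields that the reduced scheme is equi-dimensional of dimension $2$ and that every irreducible component is isomorphic to the Fermat surface $S \subset \mathbb{P}^3_k$. Theorem 3.12 of \cite{GU22} then records the combinatorial structure of the intersection pattern, giving precisely the counts $(p^3+1)(p^2+1)$ for adjacent components meeting a fixed component $X$ in a single point and $(p^3+1)(p+1)$ for adjacent components meeting $X$ in a projective line.

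The main (and essentially only) obstacle is verifying that the conventions in \cite{GU22} match ours on the nose: the signature convention, the sense in which the polarization is principal, and the equivalence relation defining isomorphism classes of quadruples. Once these are checked to agree — which they do, since both setups use the formalism of Rapoport--Zink \cite{RZ} with a standard similitude condition — there is no further work. I do not expect any new computation to be required beyond Lemma \ref{convert}; the proof is a direct transport of results through a notation change.
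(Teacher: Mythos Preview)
Your proposal is correct and matches the paper's approach exactly: the paper gives no proof for this theorem beyond the sentence introducing it, which states that the result is paraphrased from Theorems 3.10 and 3.12 of Howard--Pappas \cite{GU22} after translating their height-of-$\rho$ indexing into the $\ord_p(c(\rho))$ indexing via Lemma~\ref{convert}. There is nothing further to add.
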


\section{Application to Rapoport-Zink Spaces}

Recall that in a collection of local signatures  $( (a_i, b_i) )_{i=1}^n$, there is a fixed integer $m$ such that $a_i + b_i = m$ for all $i$. The results surveyed in the previous section can be combined with Theorem ~\ref{Decomp} to describe the geometry of the Rapoport-Zink spaces $\mathcal{N}_{( (a_i, b_i) )_{i=1}^n}$, under the assumption that $1 \leq m \leq 4$. The complexity increases with $m$, so we will begin with $m=1$:

\begin{corollary}\label{RZ1}
Assume that $m=1$.  If all $\mathfrak{p}_i$ are inert in $E$, the reduced $k$-scheme  $\mathcal{N}_{( (a_i, b_i) )_{i=1}^n}$ is zero-dimensional. The components  $\mathcal{N}^j_{( (a_i, b_i) )_{i=1}^n}$  are empty for $j$ odd and zero-dimensional for $j$ even. If any of the primes $\mathfrak{p}_i$  are split in $E$, then $\mathcal{N}_{( (a_i, b_i) )_{i=1}^n}$ is empty.

\end{corollary}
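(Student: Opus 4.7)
The plan is to reduce the statement to Theorem \ref{Decomp} and then apply the results already surveyed for the factors in the case $m = 1$. First I would apply the decomposition
$$\widetilde{\mathcal{N}}^{j}_{( (a_i, b_i) )_{i=1}^n} \ \cong \ \prod_{i \leq h} \widetilde{\mathcal{N}}^{\mathrm{split},j}_{(a_i, b_i)} \ \times \ \prod_{i > h} \widetilde{\mathcal{N}}^{\mathrm{inert},j}_{(a_i, b_i)}$$
supplied by Theorem \ref{Decomp}. Since each factor determines an open and closed piece of the corresponding larger Rapoport-Zink space, emptiness or zero-dimensionality of a single factor forces the same property of the product, and passing to the disjoint union over $j \in \Z$ preserves these properties as well.

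Next I would handle the case when some $\mathfrak{p}_i$ is split in $E$. Because $m=1$, the local signature $(a_i, b_i)$ must be either $(0,1)$ or $(1,0)$; in particular $a_i \neq b_i$. Then Proposition \ref{empty} forces $\widetilde{\mathcal{N}}^{\mathrm{split}}_{(a_i, b_i)}$ to be empty, and hence the whole product on the right hand side of the decomposition is empty for every $j$. Taking the disjoint union over $j$ yields $\mathcal{N}_{( (a_i, b_i) )_{i=1}^n} = \varnothing$, giving the second assertion of the corollary.

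Now I would treat the case where all $\mathfrak{p}_i$ are inert in $E$, so $h = 0$ and the decomposition becomes a product of inert factors only. For each $i$, the local signature $(a_i, b_i)$ lies in $\{(0,1), (1,0)\}$; by duality $\widetilde{\mathcal{N}}^{\mathrm{inert},j}_{(1,0)} \cong \widetilde{\mathcal{N}}^{\mathrm{inert},j}_{(0,1)}$, so Proposition \ref{0minert} with $m=1$ applies to every factor. It tells us that $\mathcal{N}^{\mathrm{inert},j}_{(a_i, b_i)}$ is zero-dimensional when $j$ is even and empty when $j$ is odd. Therefore each product $\mathcal{N}^{j}_{( (a_i, b_i) )_{i=1}^n}$ is empty for $j$ odd and zero-dimensional for $j$ even, as products of finitely many zero-dimensional reduced $k$-schemes are zero-dimensional. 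Taking the disjoint union over $j \in \Z$ then gives that $\mathcal{N}_{( (a_i, b_i) )_{i=1}^n}$ is zero-dimensional.

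The argument is essentially a bookkeeping exercise once Theorem \ref{Decomp} is in hand, and I do not anticipate a substantive obstacle. The only mild point to verify is that Proposition \ref{0minert}, stated for signature $(0, m)$, also covers $(1,0)$ when $m=1$; this follows from the evident isomorphism $\widetilde{\mathcal{N}}^{\mathrm{inert},j}_{(1,0)} \cong \widetilde{\mathcal{N}}^{\mathrm{inert},j}_{(0,1)}$ obtained by replacing $(G, \iota, \lambda, \rho)$ by $(G^\vee, \iota^\vee \circ \overline{(\,\cdot\,)}, \lambda^{-1}, (\rho^\vee)^{-1})$ (or, equivalently, from the fact that the local signature convention here is not intrinsically ordered and only the condition $a_i + b_i = m$ matters for the cited input).
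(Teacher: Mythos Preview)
Your proposal is correct and follows essentially the same route as the paper: apply Theorem~\ref{Decomp}, then invoke Proposition~\ref{empty} for the split factors and Proposition~\ref{0minert} for the inert factors. The only cosmetic difference is that the paper uses its standing convention $a_i \leq b_i$ to say immediately that every $(a_i,b_i) = (0,1)$, so your duality remark handling the $(1,0)$ case is unnecessary (though harmless).
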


\begin{proof}
Assume that $m=1$, and note that then all $(a_i, b_i)$ are equal to $(0,1)$.  Recall our notation that $\mathfrak{p}_i$ is split in $E$ for all $i \leq h$ and inert in $E$ for $i>h$ (in particular, $h$ might be 0 or $n$). By Theorem ~\ref{Decomp}, for any  integer $j$,
$$\mathcal{N}^{j}_{( (0, 1) )_{i=1}^n} \cong  (\mathcal{N}^{\mathrm{split},j}_{(0,1)} )^h \times_k  ( \mathcal{N}^{\mathrm{inert},j}_{(0,1)} )^{n-h}.$$ 

As $\mathcal{N}^{\mathrm{split},j}_{(0,1)}$ is empty for all $j$ and $\mathcal{N}^{\mathrm{inert},j}_{(0,1)}$ is empty for odd $j$, if either $h > 0$ or $j$ is odd, $\mathcal{N}^{j}_{( (0, 1) )_{i=1}^n}$ is empty. Otherwise, $j$ is even and $\mathcal{N}^{j}_{( (0, 1) )_{i=1}^n}$ is a product of $n$ copies of $\mathcal{N}^{\mathrm{inert},j}_{(0,1)}$, which are all zero-dimensional.
\end{proof}

\begin{corollary}\label{RZ2}
Assume that $m=2$.  Let $c$ be the number of $(a_i, b_i) =  (0,2)$ in the collection of local signatures $(( a_i, b_i))_{i=1}^n$ such that $\mathfrak{p}_i$ is split in $E$. If $c= 0$, the reduced $k$-scheme $\mathcal{N}_{( (a_i, b_i) )_{i=1}^n}$ is zero-dimensional.  Otherwise, $\mathcal{N}_{( (a_i, b_i) )_{i=1}^n}$ is empty.
\end{corollary}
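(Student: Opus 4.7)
The plan is to apply Theorem \ref{Decomp} to reduce $\mathcal{N}_{((a_i,b_i))_{i=1}^n}$ to a product of the local factors $\mathcal{N}^{\mathrm{split},j}_{(a_i,b_i)}$ and $\mathcal{N}^{\mathrm{inert},j}_{(a_i,b_i)}$, then exhaustively consult the results from the previous section for each factor. First I would observe that since $m = 2$ and we always take $a_i \leq b_i$, the only possibilities for each local signature are $(a_i, b_i) \in \{(0,2), (1,1)\}$. By Theorem \ref{Decomp}, for every integer $j$ there is an isomorphism
\begin{equation*}
\widetilde{\mathcal{N}}^{j}_{((a_i,b_i))_{i=1}^n} \cong \prod_{i=1}^h \widetilde{\mathcal{N}}^{\mathrm{split},j}_{(a_i,b_i)} \times \prod_{i=h+1}^n \widetilde{\mathcal{N}}^{\mathrm{inert},j}_{(a_i,b_i)},
\end{equation*}
where the product is over $\mathrm{Spf}(W)$. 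Passing to underlying reduced $k$-schemes gives a corresponding decomposition of $\mathcal{N}^{j}_{((a_i,b_i))_{i=1}^n}$ as a product over $k$.

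Next I would analyze each possible factor. For a split factor with signature $(0,2)$, Proposition \ref{empty} immediately gives $\mathcal{N}^{\mathrm{split},j}_{(0,2)} = \emptyset$ (since $0 \neq 2$). For a split factor with signature $(1,1)$, Proposition \ref{11split} gives that $\mathcal{N}^{\mathrm{split},j}_{(1,1)}$ is zero-dimensional. For an inert factor with signature $(0,2)$, Proposition \ref{0minert} applied with $m=2$ shows that $\mathcal{N}^{\mathrm{inert},j}_{(0,2)}$ is zero-dimensional whenever $mj = 2j$ is even, which holds for every $j \in \Z$. For an inert factor with signature $(1,1)$, Proposition \ref{11inert} gives that $\mathcal{N}^{\mathrm{inert},j}_{(1,1)}$ is zero-dimensional for all $j$.

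To finish, I would combine these observations with the definition of $c$. If $c > 0$, then there exists some $i \leq h$ with $(a_i, b_i) = (0,2)$, so the corresponding factor $\mathcal{N}^{\mathrm{split},j}_{(0,2)}$ is empty, forcing each $\mathcal{N}^{j}_{((a_i,b_i))_{i=1}^n}$ to be empty and hence $\mathcal{N}_{((a_i,b_i))_{i=1}^n} = \bigsqcup_{j \in \Z} \mathcal{N}^{j}_{((a_i,b_i))_{i=1}^n}$ to be empty. If $c = 0$, then every factor appearing in the decomposition is zero-dimensional over $k$, so each $\mathcal{N}^{j}_{((a_i,b_i))_{i=1}^n}$ is a finite product over $k$ of zero-dimensional $k$-schemes, and is therefore zero-dimensional. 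Since $\mathcal{N}_{((a_i,b_i))_{i=1}^n}$ is the disjoint union of these pieces over $j \in \Z$, it too is zero-dimensional.

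There is no real obstacle here beyond organizing the case analysis; the only mild subtlety is verifying that in the inert $(0,2)$ case the parity condition $mj$ even is automatic because $m = 2$, so no components $\mathcal{N}^{\mathrm{inert},j}_{(0,2)}$ are forced to be empty on that account. Once that is noted, the corollary is an immediate bookkeeping consequence of Theorem \ref{Decomp} together with Propositions \ref{empty}, \ref{11split}, \ref{0minert}, and \ref{11inert}.
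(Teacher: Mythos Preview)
Your proposal is correct and follows essentially the same approach as the paper: apply Theorem~\ref{Decomp} to decompose $\mathcal{N}^j_{((a_i,b_i))_{i=1}^n}$ into a product of the four possible local factors, then invoke Propositions~\ref{empty}, \ref{11split}, \ref{0minert}, and \ref{11inert} to see that the split $(0,2)$ factor is empty while the other three are zero-dimensional. Your explicit remark that the parity condition in Proposition~\ref{0minert} is automatically satisfied because $m=2$ is a nice clarification that the paper's proof leaves implicit.
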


\begin{proof}
Assume that $m=2$. For this proof only, we will use the following notation based on the local signature $(( a_i, b_i))_{i=1}^n$  condition used in defining $\mathcal{N}_{( (a_i, b_i) )_{i=1}^n}$:
$$c =  \ \text{ number of } \  (a_i, b_i) = (0,2)  \ \text{ such that } \ \mathfrak{p_i} \ \text{ is split in } \ E$$
$$d =  \ \text{ number of } \  (a_i, b_i) = (1,1)  \ \text{ such that } \ \mathfrak{p_i} \ \text{ is split in } \ E$$
$$e =  \ \text{ number of } \  (a_i, b_i) = (0,2)  \ \text{ such that } \ \mathfrak{p_i} \ \text{ is inert in } \ E$$
$$f =  \ \text{ number of } \  (a_i, b_i) = (1,1)  \ \text{ such that } \ \mathfrak{p_i} \ \text{ is inert in } \ E$$
By Theorem ~\ref{Decomp}, for any  integer $j$,
$$\mathcal{N}_{( (a_i, b_i) )_{i=1}^n}^{j} \cong   \big( \mathcal{N}^{\mathrm{split}, j}_{(0,2)} \big)^{c} \times_k  \big( \mathcal{N}^{\mathrm{split}, j}_{(1,1)} \big)^{d}  \times_k   \big( \mathcal{N}^{\mathrm{inert}, j}_{(0,2)} \big)^{e}  \times_k \big( \mathcal{N}^{\mathrm{inert}, j}_{(1,1)} \big)^{f} $$ 
The reduced $k$-schemes $\mathcal{N}^{\mathrm{split}, j}_{(1,1)}$, $\mathcal{N}^{\mathrm{inert}, j}_{(0,2)}$, and $\mathcal{N}^{\mathrm{inert}, j}_{(1,1)}$ are all zero-dimensional, while $\mathcal{N}^{\mathrm{split}, j}_{(0,2)}$ is empty. So, $\mathcal{N}_{( (a_i, b_i) )_{i=1}^n}^{j}$ is zero-dimensional whenever $c = 0$ and is empty otherwise.
\end{proof}

The next two cases, $m=3$ and $m=4$, are similar. For the sake of brevity, we will begin with the more complicated case of $m=4$, and then simply explain how the case $m=3$ differs from this.

\begin{corollary}\label{RZ}
Assume that $m=4$. Define the following integers based on the local signature condition ${( (a_i, b_i) )_{i=1}^n}$ for $\mathcal{N}_{( (a_i, b_i) )_{i=1}^n}$:
$$c =  \ \text{ number of } \  (a_i, b_i) = (0,4)  \ \text{ such that } \ \mathfrak{p_i} \ \text{ is inert in } \ E$$
$$d =  \ \text{ number of } \  (a_i, b_i) = (1,3)  \ \text{ such that } \ \mathfrak{p_i} \ \text{ is inert in } \ E$$
$$e =  \ \text{ number of } \  (a_i, b_i) = (2,2)  \ \text{ such that } \ \mathfrak{p_i} \ \text{ is inert in } \ E$$
$$f =  \ \text{ number of } \  (a_i, b_i) = (2,2)  \ \text{ such that } \ \mathfrak{p_i} \ \text{ is split in } \ E$$
$$g = \ \text{ number of } \  (a_i, b_i) \neq (2,2)  \ \text{ such that } \ \mathfrak{p_i} \ \text{ is split in } \ E$$
Note that any of these constants might be zero, and $c+d+e+f+g= n$. If $g \neq 0$, then $\mathcal{N}_{( (a_i, b_i) )_{i=1}^n}$ is empty. If $g = 0$,  then each $\mathcal{N}_{( (a_i, b_i) )_{i=1}^n}^j$ is nonempty and has geometry as follows:\\

\begin{itemize}
\item{$\mathcal{N}_{( (a_i, b_i) )_{i=1}^n}^j$ is equi-dimensional of dimension $d+2e+f$.\\}

\item{Each irreducible component of $\mathcal{N}_{( (a_i, b_i) )_{i=1}^n}^j$ is isomorphic over $k$ to:
$$  C^d \times_k S^{e} \times_k (\mathbb{P}^1)^{f},$$
 a product (over $k$) of $b$ copies of the Fermat curve, $e$ copies of the Fermat surface, and $f$ copies of the projective line. \\}
 
 \item{Any two irreducible components either intersect trivially or have intersection isomorphic over $k$ to:
 $$ C^{r} \times_k S^{s_1} \times_k (\mathbb{P}^1)^{s_2 +t}$$
 for non-negative integers $r, s_1, s_2, t$ satisfying the following three conditions:
\begin{enumerate}
\item{ $$0 \leq r \leq d$$ }
\item{ $$0 \leq s_1 + s_2 \leq e$$ }
\item{ $$0 \leq t \leq f.$$ }
\end{enumerate} }

\item{ For a fixed irreducible component $X$ of  $\mathcal{N}_{( (a_i, b_i) )_{i=1}^n}^j$, there are
 $$\big( p^3(p^3 + 1) \big)^{d-r}   \big(   (p^3 + 1)(p^2+1)  \big)^{e-s_1-s_2}  \big(   (p^3 + 1)(p+1) \big)^{s_2}  \big(  p^2(p^2 + 1) \big)^{f-t}$$   
 irreducible components $X'$ such that $X \cap X'$ is isomorphic over $k$ to:
 $$ C^{r} \times_k S^{s_1} \times_k (\mathbb{P}^1)^{s_2 +t}$$}
  \end{itemize}
\end{corollary}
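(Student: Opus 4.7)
The plan is to apply Theorem \ref{Decomp} to reduce the problem to an analysis of products of the local Rapoport-Zink spaces $\mathcal{N}^{\mathrm{split},j}_{(p,q)}$ and $\mathcal{N}^{\mathrm{inert},j}_{(p,q)}$, and then read off the geometry factor by factor using the results surveyed in Section~4. Concretely, for any $j$, Theorem~\ref{Decomp} (together with the identification of underlying reduced schemes) yields an isomorphism of $k$-schemes grouping the $n$ factors by the type of each $\mathfrak{p}_i$ and the value of $(a_i,b_i)$, so that
\[
\mathcal{N}^{j}_{((a_i,b_i))_{i=1}^n} \;\cong\; \prod_i \mathcal{N}^{\mathrm{inert},j}_{(a_i,b_i)} \times \prod_i \mathcal{N}^{\mathrm{split},j}_{(a_i,b_i)}
\]
where the first product is over inert $\mathfrak{p}_i$ and the second over split $\mathfrak{p}_i$.

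The emptiness criterion would come first. By Proposition~\ref{empty}, a split factor $\mathcal{N}^{\mathrm{split},j}_{(a_i,b_i)}$ is empty unless $a_i=b_i$, i.e.\ unless $(a_i,b_i)=(2,2)$ (since $m=4$). Thus the defining constant $g_\nu$ counts precisely the split factors that force the product to be empty, which handles the $g\neq 0$ case. Assuming $g=0$, the remaining factors fall into four classes: $\mathcal{N}^{\mathrm{inert},j}_{(0,4)}$ (zero-dimensional, by Proposition~\ref{0minert}, nonempty when $4j$ is even which is automatic), $\mathcal{N}^{\mathrm{inert},j}_{(1,3)}$ (a disjoint union of Fermat curves intersecting as in Theorem~\ref{13inert}), $\mathcal{N}^{\mathrm{inert},j}_{(2,2)}$ (a union of Fermat surfaces as in Theorem~\ref{22inert}), and $\mathcal{N}^{\mathrm{split},j}_{(2,2)}$ (a union of $\mathbb{P}^1$'s as in Theorem~\ref{22split}). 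The number of such factors is $c_\nu,d_\nu,e_\nu,f_\nu$ respectively, and nonemptiness of each individual factor for every $j$ gives nonemptiness of the product.

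From here the geometric claims follow by general facts about products. Since an irreducible component of a finite product of schemes is precisely a product of irreducible components of the factors, each component of $\mathcal{N}^{j}_{((a_i,b_i))_{i=1}^n}$ is isomorphic to a product of a point (from each of the $c_\nu$ zero-dimensional factors), a Fermat curve $C$ (from each of the $d_\nu$ Fermat-curve factors), a Fermat surface $S$ (from each of the $e_\nu$ Fermat-surface factors), and a $\mathbb{P}^1$ (from each of the $f_\nu$ split-$(2,2)$ factors), giving the claimed shape $C^{d_\nu}\times_k S^{e_\nu}\times_k(\mathbb{P}^1)^{f_\nu}$ and dimension $d_\nu+2e_\nu+f_\nu$. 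For intersections, given two components $X=\prod X_i$ and $X'=\prod X_i'$, the scheme-theoretic intersection decomposes factor-wise as $X\cap X'=\prod(X_i\cap X_i')$. In each factor the possibilities for $X_i\cap X_i'$ are enumerated by the surveyed theorems: empty, a single point, a projective line (only possible for the Fermat surface factors, by Theorem~\ref{22inert}), or the whole component $X_i$. Sorting the factors by which of these four behaviors occurs, and noting that point-intersections contribute a copy of $\Spec k$ (invisible in the isomorphism type), we get exactly the form $C^r\times_k S^{s_1}\times_k(\mathbb{P}^1)^{s_2+t}$ with the stated constraints, where $r$ counts Fermat-curve factors with $X_i=X_i'$, $s_1$ and $s_2$ count Fermat-surface factors where $X_i=X_i'$ or $X_i\cap X_i'=\mathbb{P}^1$ respectively, and $t$ counts $\mathbb{P}^1$ factors with $X_i=X_i'$.

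For the counting statement, once one fixes which factors contribute in which way, the total number of $X'$ achieving a given intersection type is the product of the counts in each factor, which are exactly $p^3(p^3+1)$ (Theorem~\ref{13inert}), $(p^3+1)(p^2+1)$ for point intersections and $(p^3+1)(p+1)$ for $\mathbb{P}^1$ intersections (Theorem~\ref{22inert}), and $p^2(p^2+1)$ (Theorem~\ref{22split}); factors where $X_i=X_i'$ contribute a factor of $1$. The main routine obstacle is simply bookkeeping: making sure Theorem~\ref{Decomp} really identifies \emph{irreducible components} on the nose (it does, since a product of reduced $k$-schemes over an algebraically closed field has irreducible components equal to products of irreducible components of the factors), and that the intersection-in-a-$\mathbb{P}^1$ case in the Fermat-surface factors is the only source of a positive-dimensional, non-component intersection in any factor. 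No genuine new geometric input is needed beyond the surveyed results and Theorem~\ref{Decomp}.
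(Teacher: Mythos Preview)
Your proposal is correct and follows essentially the same approach as the paper: apply Theorem~\ref{Decomp} to reduce to a product of the local factors, use Proposition~\ref{empty} for the emptiness criterion, and then read off the irreducible components, their intersections, and the counting statement factor-by-factor from Proposition~\ref{0minert} and Theorems~\ref{13inert}, \ref{22inert}, \ref{22split}. The only cosmetic slip is that you occasionally write $c_\nu, d_\nu, \dots$ where the corollary uses $c, d, \dots$; otherwise your argument matches the paper's proof in structure and detail.
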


\begin{proof}
To see that $\mathcal{N}_{( (a_i, b_i) )_{i=1}^n}$ is empty whenever $g \neq 0$, combine Theorem ~\ref{Decomp} with Proposition \ref{empty}. For the rest of this proof, assume that $g = 0$. As $\mathcal{N}_{( (a_i, b_i) )_{i=1}^n} = \bigsqcup_{j \in \Z} \mathcal{N}_{( (a_i, b_i) )_{i=1}^n}^j$, it is enough to understand the geometry of each $\mathcal{N}_{( (a_i, b_i) )_{i=1}^n}^j$.   Applying Theorem ~\ref{Decomp} in terms of the integers $c$, $d$, $e$, and $f$ defined above, we have:

$$\mathcal{N}_{( (a_i, b_i) )_{i=1}^n}^{j} \cong   \big( \mathcal{N}^{\mathrm{inert}, j}_{(0,4)} \big)^{c} \times_k  \big( \mathcal{N}^{\mathrm{inert}, j}_{(1,3)} \big)^{d}  \times_k   \big( \mathcal{N}^{\mathrm{inert}, j}_{(2,2)} \big)^{e}  \times_k \big( \mathcal{N}^{\mathrm{split}, j}_{(2,2)} \big)^{f} $$ 

By  Proposition ~\ref{0minert}, Theorem ~\ref{13inert}, Theorem ~\ref{22inert}, and Theorem ~\ref{22split}, respectively, we have that 
\begin{itemize}
\item{$\mathcal{N}^{\mathrm{inert}, j}_{(0,4)}$  is zero-dimensional} 
\item{Every irreducible component of $\mathcal{N}^{\mathrm{inert}, j}_{(1,3)}$ is isomorphic to the Fermat curve $C$}
\item{Every irreducible component of $\mathcal{N}^{\mathrm{inert}, j}_{(2,2)}$ is isomorphic to the Fermat surface $S$}
\item{Every irreducible component of $ \mathcal{N}^{\mathrm{split}, j}_{(2,2)}$ is isomorphic to the projective line $\mathbb{P}^1$.}
 \end{itemize}

It follows that every irreducible component of $\mathcal{N}_{( (a_i, b_i) )_{i=1}^n}^{j}$ is isomorphic to
$$  C^d \times_k S^{e} \times_k (\mathbb{P}^1)^{f},$$
and that $\mathcal{N}_{( (a_i, b_i) )_{i=1}^n}^{j}$ is equi-dimensional of dimension $d+2e+f$.

Also recall that by Theorem ~\ref{13inert}, Theorem ~\ref{22inert}, and Theorem ~\ref{22split}, respectively,
\begin{itemize}
\item{For a fixed irreducible component $X \cong C$  of $\mathcal{N}^{inert, j}_{(1,3)}$ there are $p^3(p^3 + 1)$ distinct irreducible components $X' \cong C$ such that $X \cap X'$ is a single point.}
\item{For a fixed irreducible component $X \cong S$ of $\mathcal{N}^{inert, j}_{(2,2)}$ there are $(p^3 + 1)(p^2 + 1)$ distinct irreducible components $X' \cong S$ such that $X \cap X'$ is a single point and $(p^3 + 1)(p+1)$ distinct irreducible components $X' \cong S$ such that $X \cap X'$ is a projective line.}
\item{For a fixed irreducible component $X \cong \mathbb{P}^1$ of $\mathcal{N}^{split, j}_{(2,2)}$  there are $p^2(p^2 +1)$ distinct irreducible components $X' \cong \mathbb{P}^1$ such that $X \cap X'$ is a single point. }
\end{itemize}

For a fixed irreducible component 
$$X =  C_1 \times_k \cdots \times_k  C_d  \times_k S_1 \times_k \cdots \times_k  S_e \times_k \mathbb{P}_1^1 \times_k \cdots \times_k  \mathbb{P}_f^1$$
of $\mathcal{N}_{( (a_i, b_i) )_{i=1}^n}^{j}$, another irreducible component  
$$X' =   C'_1 \times_k \cdots \times_k  C'_d  \times_k S'_1 \times_k \cdots \times_k  S'_e \times_k (\mathbb{P}_1^1)' \times_k \cdots \times_k  (\mathbb{P}_f^1)'$$
will intersect nontrivially with $X$ if and only if each $C_i$ (respectively $S_i,   \ \mathbb{P}_i$) is either equal to $C'_i$ (respectively $S_i, \ ( \mathbb{P}_i)'$) or intersects $C'_i$ (respectively $S'_i,  \ (\mathbb{P}_i)'$)  nontrivially, when viewed as an irreducible component of $\mathcal{N}^{\mathrm{inert}, j}_{(1,3)}$ (respectively $\mathcal{N}^{\mathrm{inert}, j}_{(2,2)}, \ \mathcal{N}^{\mathrm{split}, j}_{(2,2)}$) via the isomorphism $\Phi$ described in Theorem ~\ref{Decomp}.

By letting $r$ to be the number of $C'_i$ equal to $C_i$ (so $d-r$ of the $C'_i$ intersect $C_i$ in a single point), letting $s_1$ be the number of $S'_i$ equal to $S_i$ and $s_2$ be the number of $S'_i$ intersecting $S_i$ in a projective line (so $e - s_1 - s_2$ of the $S'_i$ intersect $S_i$ in a single point), and letting $t$ be the number of $(\mathbb{P}^1_i)'$ equal to $\mathbb{P}^1_i$ (so $f-t$ of the $(\mathbb{P}^1_i)'$ intersect $\mathbb{P}^1_i$ in a single point), we have that there are 
$$\big( p^3(p^3 + 1) \big)^{d-r}   \big(   (p^3 + 1)(p^2+1)  \big)^{e-s_1-s_2}  \big(   (p^3 + 1)(p+1) \big)^{s_2}  \big(  p^2(p^2 + 1) \big)^{f-t}$$   
 irreducible components $X'$ such that $X \cap X'$ is isomorphic over $k$ to:
  $$ C^r\times_k S^{s_1} \times_k (\mathbb{P}^1)^{s_2 +t}.$$
\end{proof}

\begin{corollary}\label{RZ3}
Assume that $m=3$.  If any of the  $\mathfrak{p}_i$ are split in $E$, the moduli space $\mathcal{N}_{( (a_i, b_i) )_{i=1}^n}$ is empty. Otherwise, let $c$ be the number of $(a_i, b_i) = (1,2)$. Then the reduced $k$-scheme $\mathcal{N}_{( (a_i, b_i) )_{i=1}^n}$  has geometry as follows:
\begin{itemize}
\item{For $j$ even, $\mathcal{N}_{( (a_i, b_i) )_{i=1}^n}^j$  is equi-dimensional of dimension $c$. For $j$ odd, $\mathcal{N}_{( (a_i, b_i) )_{i=1}^n}^j$  is empty.}
\item{Each irreducible component of $\mathcal{N}_{( (a_i, b_i) )_{i=1}^n}^j$ is isomorphic over $k$ to $C^{c}$,
 a product (over $k$) of $c$ copies of the Fermat curve $C$.}
 \item{Any two irreducible components either intersect trivially or have intersection isomorphic over k to $C^{t}$
 for some integer $0 \leq t \leq c$.}
 \item{For a fixed irreducible component $X$ of $\mathcal{N}_{( (a_i, b_i) )_{i=1}^n}^j$, there are $( p(p^3 + 1) )^{c-t}$ irreducible components $X'$ such that $X \cap X'$ is isomorphic over $k$ to $C^{t}$.}
\end{itemize}
\end{corollary}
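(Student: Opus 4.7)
The plan is to mirror the proof of Corollary \ref{RZ}, adapted to the odd-$m$ setting. First, I would address the vanishing when some $\mathfrak{p}_i$ splits in $E$: because $m = 3$ is odd, any signature $(p,q)$ with $p + q = 3$ satisfies $p \neq q$, so Proposition \ref{empty} forces each $\widetilde{\mathcal{N}}^{\mathrm{split}}_{(p,q)}$ to be empty. Combined with Theorem \ref{Decomp}, the presence of a single split $\mathfrak{p}_i$ makes the entire product, and hence $\mathcal{N}_{((a_i, b_i))_{i=1}^n}$, empty.

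Assume henceforth that all $\mathfrak{p}_i$ are inert, so each $(a_i, b_i)$ is either $(0,3)$ or $(1,2)$. Theorem \ref{Decomp} supplies, for each $j \in \Z$, an isomorphism
\[
\mathcal{N}_{((a_i, b_i))_{i=1}^n}^j \cong \bigl(\mathcal{N}^{\mathrm{inert}, j}_{(0,3)}\bigr)^{n - c} \times_k \bigl(\mathcal{N}^{\mathrm{inert}, j}_{(1,2)}\bigr)^c.
\]
Applying Proposition \ref{0minert} with $m = 3$ shows that $\mathcal{N}^{\mathrm{inert}, j}_{(0,3)}$ is empty when $j$ is odd and zero-dimensional when $j$ is even; Theorem \ref{12inert} shows that $\mathcal{N}^{\mathrm{inert}, j}_{(1,2)}$ is empty for $j$ odd and, for $j$ even, is equi-dimensional of dimension one with every irreducible component isomorphic to the Fermat curve $C$. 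These facts immediately yield the emptiness of $\mathcal{N}^j$ for $j$ odd and, for $j$ even, that $\mathcal{N}^j$ is equi-dimensional of dimension $c$ with each irreducible component isomorphic to $C^c$.

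The intersection statements follow by tracking the product structure as in the proof of Corollary \ref{RZ}. An irreducible component of $\mathcal{N}^j$ is labeled by a choice of point in each of the $n - c$ zero-dimensional factors together with a choice of Fermat-curve component in each of the $c$ one-dimensional factors; two components meet nontrivially exactly when they agree in every zero-dimensional coordinate and, in each Fermat-curve coordinate, either coincide or meet in a single point (the latter being the only nontrivial option by Theorem \ref{12inert}). Writing $t$ for the number of coincident Fermat-curve coordinates yields intersection isomorphic to $C^t$, and the count $p(p^3 + 1)$ of components meeting a fixed one in a single point (Theorem \ref{12inert}) combines multiplicatively to give exactly $(p(p^3+1))^{c-t}$ components $X'$ with $X \cap X' \cong C^t$ for each fixed configuration of which coordinates agree. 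I do not anticipate a genuine obstacle; the main care required is coordinating the parity-in-$j$ emptiness conditions coming from Proposition \ref{0minert} and Theorem \ref{12inert}, which happen to coincide because $3j$ and $j$ have the same parity.
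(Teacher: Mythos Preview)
Your proposal is correct and follows essentially the same approach as the paper's own proof: both reduce to the product decomposition of Theorem~\ref{Decomp}, invoke Proposition~\ref{empty} for the split-prime emptiness, and then read off the geometry and intersection combinatorics from Proposition~\ref{0minert} and Theorem~\ref{12inert} exactly as in the proof of Corollary~\ref{RZ}. Your write-up is in fact slightly more explicit than the paper's in two respects---you note that the parity conditions from Proposition~\ref{0minert} (on $3j$) and Theorem~\ref{12inert} (on $j$) coincide, and you point out that nontrivial intersection requires agreement in the zero-dimensional factors---but these are clarifications rather than a different argument.
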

\begin{proof}
The proof of this statement is analogous to the proof of Corollary \ref{RZ}. In this case, $\mathcal{N}^j_{( (a_i, b_i) )_{i=1}^n}$ is isomorphic to some number of copies of $\mathcal{N}^{split, j}_{(0,3)}$, $\mathcal{N}^{split, j}_{(1,2)}$, $\mathcal{N}^{inert, j}_{(0,3)}$, and $\mathcal{N}^{inert, j}_{(1,2)}$. The fact that  $\mathcal{N}_{( (a_i, b_i) )_{i=1}^n}$ is empty whenever any of the primes $\mathfrak{p}_i$ are split in $E$ comes from the fact that $\mathcal{N}^{split, j}_{(0,3)}$ and $\mathcal{N}^{split, j}_{(1,2)}$ are both empty. 

When none of the $\mathfrak{p}_i$ are split in $E$, we have that
$$\mathcal{N}^j_{( (a_i, b_i) )_{i=1}^n} \cong (\mathcal{N}^{inert, j}_{(1,2)})^c \times_k  (\mathcal{N}^{inert, j}_{(0,3)})^{n-c},$$
where $c$  is the number of the number of $(a_i, b_i) = (1,2)$. Note that for $j$ odd, both $\mathcal{N}^{inert, j}_{(1,2)}$ and $\mathcal{N}^{inert, j}_{(0,3)}$ are empty. For $j$ even, $\mathcal{N}^{inert, j}_{(0,3)}$ is zero-dimensional, so the geometry of $\mathcal{N}^j_{( (a_i, b_i) )_{i=1}^n}$ is determined by the geometry of $\mathcal{N}^{inert, j}_{(1,2)}$.

In this case of $m=3$, the intersection combinatorics for $\mathcal{N}^j_{( (a_i, b_i) )_{i=1}^n}$ are determined by the intersection combinatorics for $\mathcal{N}^{inert, j}_{(1,2)}$ in the same way that for $m=4$, the intersection combinatorics for $\mathcal{N}_{( (a_i, b_i) )_{i=1}^n}$ are determined by those of $ \mathcal{N}^{\mathrm{inert}, j}_{(1,3)}$, $ \mathcal{N}^{\mathrm{inert}, j}_{(2,2)}$, and $ \mathcal{N}^{\mathrm{split}, j}_{(2,2)}$. 
\end{proof}

\section{Application to Supersingular Loci}\label{appl}

We will now describe the supersingular locus at a prime $\nu$ of the reflex field $F$ of the unitary Shimura variety $\mathrm{Sh}^V_K$ introduced in Section \ref{intro}. We must first describe the integral model $\mathcal{M}_K$ discussed in the introduction more carefully.

Recall that $\mathcal{O}$ is the integral closure of $\Z_{(p)}$ in $E$, the nontrivial automorphism of $E$ over $E^+$ is denoted $e \mapsto \overline{e}$, and that we've fixed a free $\mathcal{O}$-module of rank $m$ with a perfect $\mathcal{O}$-valued hermitian form $\langle \cdot, \cdot \rangle$. The  $n$ embeddings of $E^+$ into $\overline{\Q}$ are denoted by $\{ \varphi_{i} \}_{i=1}^n$ and the $2n$ embeddings of $E$ into $\overline{\Q}$ are denoted by $\{ \varphi_{i,a} , \varphi_{i,b} \}_{i=1}^n$. Let  $(a_i, b_i)$ be the signature of $V \otimes_{\mathcal{O}} E$ at the place defined by $\varphi_i$.

Let $G = \GU(V \otimes_{\mathcal{O}} E)$, viewed as an algebraic group over $\Q$. Note that:
$$G(\mathbb{R}) \cong G\left( \prod_{i=1}^n \mathrm{U}(a_i, b_i) \right)$$
where $\mathrm{U}(a_i,b_i)$ is the real unitary group of signature $(a_i, b_i)$, and the notation $G( \ \ )$ refers to the fact that the similitude factors for all $i$ must be the same.
 
 Let $h$ be the homomorphism of real algebraic groups
 $$h: \ \mathrm{Res}^\C_{\R} (\mathbb{G}_m ) \rightarrow G_{\R}$$
 that sends $z \in \C^\times$ to the matrix $\mathrm{diag}(z^{a_1}, \overline{z}^{b_1}, \dots, z^{a_n}, \overline{z}^{b_n} )$. Then the tuple $(E, \mathcal{O}, e \mapsto \overline{e}, V, \langle \cdot, \cdot \rangle, G, h)$ is a PEL-datum, in the sense of Kottwitz \cite{Ko}, with reflex field $F$.

Recall that $K_p = GU(V)(\Z_p)$, $K^p \subset GU(V)(\mathbb{A}_p^f)$ is a compact open subgroup, and $K = K_pK^p$. For $K^p$ sufficiently small, and $mn$ even the Shimura variety $\mathrm{Sh}^K_V$ admits an integral canonical model $\mathcal{M}_K$ over $\mathcal{O}_F \otimes_\Z \Z_{(p)}$, as defined by Kottwitz ~\cite{Ko}, based on the PEL-datum above. For $mn$ odd, the complex fiber of $\mathcal{M}_K$ is a disjoint union of copies of $\mathrm{Sh}^K_V$. For a scheme $S$ over $\mathcal{O}_F \otimes_\Z \Z_{(p)}$, the moduli space $\mathcal{M}_K$ parametrizes prime-to-$p$ isogeny classes of quadruples $(A, \iota, \lambda, \eta^pK^p)$, where:

\begin{itemize}
\item{ $A$ is a projective abelian scheme over $S$ of relative dimension $mn$.}
\item{ $\iota: \mathcal{O} \rightarrow \mathrm{End}(A) \otimes_{\Z} \Z_{(p)}$ is an action of $\mathcal{O}$ on $A$, satisfying the \emph{global signature $((a_i, b_i))_{i=1}^n$ condition}: For all $e \in \mathcal{O}$,
$$\det(T - \iota(e); \mathrm{Lie}(A) ) = \prod_{i=1}^n(T - \varphi_{i,a}(e))^{a_i}(T - \varphi_{i,b}(e))^{b_i}.$$
Note that the coefficients of this polynomial lie in $\mathcal{O}_F \otimes_\Z \Z_{(p)}$.}
\item{$\lambda \in \Hom(A, A^\vee) \otimes_{\Z} \Z_{(p)}$ is a prime-to-$p$ quasi-polarization of $A$, which satisfies the \emph{Rosati involution condition}: for all $e \in \mathcal{O}$,
$$\lambda \circ \iota(\overline{e}) = \iota(e)^\vee \circ \lambda,$$
where $\iota(e)^\vee$ is the dual isogeny to $\iota(e)$.}
\item{The level structure $\eta^pK^p$ is the $K^p$ orbit of an $\mathcal{O} \otimes \A_f^p$-linear isomorphism:
$$\eta^p: H_1(A, \mathbb{A}_f^p) \rightarrow V \otimes \A_f^p$$
that respects the hermitian forms on either side, up to scaling by $(\A_f^p)^\times$.}
\end{itemize}

For a prime $\nu$ of $F$ over $p$, the supersingular locus $\mathcal{M}_{K,\nu}^{ss}$ of $\mathcal{M}_K \times_{    \mathcal{O}_{F} \otimes_{\Z} \Z_{(p)}, \nu  } k$ is uniformized by a Rapoport-Zink space of the form $\mathcal{N}_{( (a_{\nu,i}, b_{\nu,i}) )_{i=1}^n}$. To apply this, we need to examine the signature conditions more carefully:

We will give an equivalent formulation of the global signature $((a_i, b_i))_{i=1}^n$ condition (note that the following discussion uses the assumption that $p$ is unramified in $E$). The homomorphism $h$ determines a decomposition:
$$V_{\C} = V_0 \oplus V_1$$
where $V_0$ is the subspace of $V_{\C}$ on which $h(z)$ acts by $z$ and $V_1$ is the subspace of $V_{\C}$ on which $h(z)$ acts by $\overline{z}$ (this decomposition is actually defined over $F$).  As
$$\mathrm{det}(T - e; V_0) = \prod_{i=1}^n(T - \varphi_{i,a}(e))^{a_i}(T - \varphi_{i,b}(e))^{b_i}$$
we can rephrase the global signature $((a_i, b_i))_{i=1}^n$ condition as:
$$\det(T - \iota(e); \mathrm{Lie}(A) ) = \mathrm{det}(T - e; V_0)$$
(this is the determinant condition as defined by Kottwitz).

Given such a PEL-datum and a choice of prime $\nu$ of the reflex field $F$ over $p$, Rapoport and Zink construct a Rapoport-Zink space $\widetilde{\mathcal{N}}$ with underlying reduced scheme $\mathcal{N}$ uniformizing the supersingular locus $\mathcal{M}_{K,\nu}^{ss}$ (see Rapoport-Zink \cite{RZ} Chapter 6). Fix a prime $\nu$ of $F$. The Rapoport-Zink space $\widetilde{\mathcal{N}}$ needed to uniformize $\mathcal{M}_{K,\nu}^{ss}$ is one of the moduli spaces $\widetilde{\mathcal{N} }_{( (a_{\nu,i}, b_{\nu,i}) )_{i=1}^n}$ defined in Section ~\ref{RZdef}, where the local signature $( (a_{\nu, i}, b_{\nu, i}) )_{i=1}^n$ condition is derived from $V$ and $\nu$ as follows: 

As the decomposition above is defined over a number field, we will fix a decomposition over $\overline{\Q}$:
$$V \otimes_{\Q} \overline{\Q} = V_0 \oplus V_1.$$
The prime $\nu$ gives an embedding $F \rightarrow \overline{\Q}_p$, which can be extended to an embedding $\rho_v: \overline{\Q} \rightarrow \overline{\Q}_p$. Then, the splitting of $V \otimes_{\Q} \overline{\Q}$ above gives a splitting:
$$(V \otimes_{\Q} \overline{\Q}) \otimes_{\overline{\Q}, \rho_\nu} \overline{\Q}_p = V_0 \otimes_{\overline{\Q}, \rho_\nu} \overline{\Q}_p \oplus V_1 \otimes_{\overline{\Q}, \rho_\nu} \overline{\Q}_p.$$
(This actually occurs over a finite extension of $\Q_p$.)

The local signature condition at $\nu$ used in defining $\widetilde{\mathcal{N}}$ is that:
$$\det(T - \iota(e); \mathrm{Lie}(G) ) = \mathrm{det}(T - e; V_0  \otimes_{\overline{\Q}, \rho_{\nu} } \overline{\Q}_p  )$$
for all $e \in \mathcal{O}_E \otimes_{\Z} \Z_p$.  As
$$\mathrm{det}(T - e; V_0) = \prod_{i=1}^n(T - \varphi_{i,a}(e))^{a_i}(T - \varphi_{i,b}(e))^{b_i},$$
the local signature condition at $\nu$ is:
$$\det(T - \iota(e); \mathrm{Lie}(G) ) = \mathrm{det}(T - e; V_0  \otimes_{\overline{\Q}, \rho_{\nu} } \overline{\Q}_p  ) =  \prod_{i=1}^n(T - \rho_\nu \circ \varphi_{i,a}(e))^{a_i}(T - \rho_\nu \circ \varphi_{i,b}(e))^{b_i}.$$

In particular, from the above description of the signature condition, it is clear that this depends only on $V$ and $\nu$, and not the choice of $\rho_\nu$ extending $\nu: F \rightarrow \overline{\Q}_p$. As $\phi \mapsto \rho_\nu \circ \phi$ gives an isomorphism $\Hom_{\Q}(E, \overline{\Q}) \rightarrow \Hom_{\Q}(E, \overline{\Q}_p)$ (that does \emph{not} necessarily identify $\varphi_{i,a}$ with $\psi_{i,a}$), we have
$$\mathrm{det}(T - e; V_0  \otimes_{\overline{\Q}, \rho_{\nu} } \overline{\Q}_p  )  = \prod_{i=1}^n(T - \psi_{i,a}(e))^{a_{\nu,i}}(T - \psi_{i,b}(e))^{b_{\nu,i}},$$
which is the polynomial defining the local signature $( (a_{\nu, i}, b_{\nu,i}) )_{i=1}^n$ condition, for some integers $a_{\nu,i}, b_{\nu, i}$. We assume without loss of generality that $a_{\nu,i} \leq b_{\nu,i}$ for all $i$. 


\begin{definition}\label{localsign}
Given a prime $\nu$ of $F$, let $( (a_{\nu, i}, b_{\nu,i}) )_{i=1}^n$ denote the collection of local signatures constructed above from $V$ and $\nu$. This is the list of integers with the property that:
$$\mathrm{det}(T - e; V_0  \otimes_{\overline{\Q}, \rho_{\nu} } \overline{\Q}_p  ) = \prod_{i=1}^n(T - \psi_{i,a}(e))^{a_{\nu,i}}(T - \psi_{i,b}(e))^{b_{\nu,i}}.$$

\end{definition}

We can now use the description given in Theorem \ref{RZ} of the geometry of $\mathcal{N}_{( (a_{\nu,i}, b_{\nu,i}) )_{i=1}^n}$ to give a description of the geometry of $\mathcal{M}_{K,\nu}^{ss}$. Fix a point $(\boldsymbol{A}, \biota, \blambda, \boldsymbol{\eta^p}K^p) \in \mathcal{M}_{K,\nu}^{ss}(k)$. We will use this point to define a particular choice of basepoint of $\widetilde{\mathcal{N}}_{( (a_{\nu,i}, b_{\nu,i}) )_{i=1}^n}$, which we are free to do by  ~\cite{VW} Lemma 5.1.

 Let $\bG = \boldsymbol{A}[p^\infty]$ be the $p$-divisible group of $\boldsymbol{A}$. The action $\boldsymbol{\iota}: \mathcal{O} \rightarrow \mathrm{End}(A) \otimes_{\Z} \Z_{(p)}$ on $A$ induces an action, also denoted $\boldsymbol{\iota}$, of $\mathcal{O}_E \otimes_{\Z} \Z_p$ on $\boldsymbol{G}$. The prime-to-$p$ quasi-polarization $\blambda$ of $\boldsymbol{A}$ induces a principal polarization, also denoted $\blambda$, of $\boldsymbol{G}$. The fact the action $\biota$ on $\boldsymbol{A}$ satisfies the global $( (a_i, b_i) )_{i=1}^n$ signature condition implies that the induced action $\biota$ on $\boldsymbol{G}$ satisfies the local $( (a_{\nu,i}, b_{\nu,i} ) )_{i=1}^n$ signature condition. 

 Define $J$ as the subgroup of $(\End(\G) \otimes_{\Z} \Q)^\times$ of quasi-endomorphisms $g$ of $\bG$ that are $\mathcal{O}_E \otimes_{\Z} \Z_p$-linear and respect the polarization of $\bG$, in the sense that $g^*(\blambda) = c(g) \blambda$, for some $c(g) \in \Q_p^\times$. There is an algebraic group $I$ over $\Q$ such that $I(\Q)$ is the subgroup of $(\End(\boldsymbol{A}) \otimes_{\Z} \Q)^\times$ of quasi-endomorphisms $g$ of $\boldsymbol{A}$ that are $\mathcal{O}$ linear and respect the polarization of $\boldsymbol{A}$, in the sense that $g^*(\blambda) = c(g) \blambda$ for some $\nu(g) \in \Q^\times$.  For this algebraic group $I$, we have that $I(\Q_p) \cong J$. The level structure $\boldsymbol{\eta^p} K^p$ determines a right $K^p$-orbit of isomorphisms $I(\A_f^p) \cong G(\A_f^p)$. Using this, $I(\Q)$ acts on both $\mathcal{N}_{( (a_{\nu,i}, b_{\nu,i}) )_{i=1}^n}$ and $G(\A_f^p)/K^p$.  Now we may state the Rapoport-Zink uniformization theorem:

\begin{theorem}[Rapoport-Zink]
There is an isomorphism of $k$-schemes:
$$\mathcal{M}_{K,\nu}^{ss} \cong I(\Q) \setminus (\mathcal{N}_{( (a_{\nu,i}, b_{\nu,i}) )_{i=1}^n} \times G(\A_f^p) / K^p).$$
\end{theorem}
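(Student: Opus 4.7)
The plan is to invoke the general Rapoport-Zink uniformization theorem from \cite{RZ} Chapter 6 and then identify the Rapoport-Zink space appearing there with our specific $\mathcal{N}_{((a_{\nu,i},b_{\nu,i}))_{i=1}^n}$. The desired isomorphism is essentially a bookkeeping consequence of \cite{RZ} once the PEL datum $(E, \mathcal{O}, e \mapsto \overline{e}, V, \langle\cdot,\cdot\rangle, G, h)$ is in place and the choices of basepoint at $\nu$ have been made compatibly.

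First, I would verify that the PEL datum constructed at the beginning of this section satisfies the hypotheses needed to apply the uniformization theorem: the prime $p$ is unramified in $E$ by assumption, the level subgroup $K_p = G(\Z_p)$ is hyperspecial (so that an integral canonical model exists and the Rapoport-Zink space has no level structure at $p$), and the basepoint $(\boldsymbol{A}, \biota, \blambda, \boldsymbol{\eta^p} K^p) \in \mathcal{M}_{K,\nu}^{ss}(k)$ defines a uniformization datum. By \cite{RZ} Chapter 6, there is then an isomorphism
$$\mathcal{M}_{K,\nu}^{ss} \cong I(\Q) \setminus (\mathcal{N}^{\mathrm{RZ}} \times G(\A_f^p) / K^p),$$
where $\mathcal{N}^{\mathrm{RZ}}$ is the Rapoport-Zink space attached to the local PEL datum at $\nu$ and the basepoint $\bG = \boldsymbol{A}[p^\infty]$ with its induced action and polarization.

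Second, I would identify $\mathcal{N}^{\mathrm{RZ}}$ with $\mathcal{N}_{((a_{\nu,i},b_{\nu,i}))_{i=1}^n}$ from Section \ref{RZdef}. The moduli problems agree term by term: both parametrize quadruples $(G, \iota, \lambda, \rho)$ consisting of a supersingular $p$-divisible group of height $2mn$ and dimension $mn$, an $\mathcal{O}_E \otimes_{\Z} \Z_p$-action, a principal polarization satisfying the Rosati involution condition, and a quasi-isogeny to $\bG$ compatible with the polarization up to a scalar. The only nontrivial check is that the determinant condition in \cite{RZ}, which is obtained by pulling back the Kottwitz determinant condition for $V_0$ through the embedding $\rho_\nu$, coincides with the local signature $((a_{\nu,i},b_{\nu,i}))_{i=1}^n$ condition; this is precisely the content of Definition \ref{localsign}. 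Because \cite{VW} Lemma 5.1 guarantees that there is a unique isogeny class of basepoints meeting these conditions, the choice of $\bG = \boldsymbol{A}[p^\infty]$ as basepoint does not affect the resulting space.

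Third, I would match the group actions. The group $I$ is defined so that $I(\Q_p) \cong J$, and $I(\A_f^p) \cong G(\A_f^p)$ via the $K^p$-orbit of isomorphisms determined by the level structure $\boldsymbol{\eta^p} K^p$; under these identifications, $I(\Q)$ embeds diagonally into $J \times G(\A_f^p)$ and acts by quasi-isogeny composition on $\mathcal{N}_{((a_{\nu,i},b_{\nu,i}))_{i=1}^n}$ (through $J$) and by left translation on $G(\A_f^p)/K^p$, exactly as in the statement. The main obstacle, if any, is the careful reconciliation of the similitude factors and the Kottwitz determinant condition between the general setup of \cite{RZ} and our explicit local signature formulation; apart from this bookkeeping, the statement is a direct specialization of \cite{RZ} Chapter 6.
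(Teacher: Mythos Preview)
Your proposal is correct and matches the paper's approach: the paper does not prove this theorem at all but simply states it as a citation from \cite{RZ} Chapter 6, with the identification of the Rapoport-Zink space $\mathcal{N}^{\mathrm{RZ}}$ with $\mathcal{N}_{((a_{\nu,i},b_{\nu,i}))_{i=1}^n}$ carried out in the discussion preceding the theorem (culminating in Definition~\ref{localsign}). Your sketch makes explicit exactly the bookkeeping the paper leaves implicit, so there is nothing to correct.
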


There are finitely many double cosets in $I(\Q) \setminus G(\A_f^p) / K^p$, so let $g_1, ..., g_r$ be representatives of these double cosets. Let $\Gamma_j = I(\Q) \cap g_j K^p g_j^{-1}$, for all $1 \leq j \leq r.$ Then,
$$\bigsqcup_{i=1}^r \Gamma_j \setminus \mathcal{N}_{( (a_{\nu,i}, b_{\nu,i}) )_{i=1}^n} \cong  I(\Q) \setminus (\mathcal{N}_{( (a_{\nu,i}, b_{\nu,i}) )_{i=1}^n} \times G(\A_f^p) / K^p).$$
\begin{lemma}\label{localisom}
For $K^p$ sufficiently small, the surjective morphism:
$$\mathcal{N}_{( (a_{\nu,i}, b_{\nu,i}) )_{i=1}^n} \rightarrow \bigsqcup_{i=1}^r \Gamma_j \setminus \mathcal{N}_{( (a_{\nu,i}, b_{\nu,i}) )_{i=1}^n} \cong \mathcal{M}_{K,\nu}^{ss}$$
induced by the isomorphism in the Rapoport-Zink Uniformization Theorem is a local isomorphism.
\end{lemma}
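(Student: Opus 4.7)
The plan is to deduce the lemma from the Rapoport--Zink Uniformization Theorem combined with a standard freeness argument for the arithmetic subgroups $\Gamma_j$. Since the displayed morphism factors as a disjoint union (over $j$) of the individual quotient maps $\mathcal{N}_{( (a_{\nu,i}, b_{\nu,i}) )_{i=1}^n} \to \Gamma_j \setminus \mathcal{N}_{( (a_{\nu,i}, b_{\nu,i}) )_{i=1}^n}$, it suffices to show that, for $K^p$ sufficiently small, each $\Gamma_j$ acts freely and properly discontinuously on $\mathcal{N}_{( (a_{\nu,i}, b_{\nu,i}) )_{i=1}^n}$. Once that is established, the usual argument shows that each $k$-point admits an open neighborhood $U$ with $\gamma U \cap U = \emptyset$ for all $\gamma \neq 1$ in $\Gamma_j$, so $U$ maps isomorphically onto its image in the quotient, giving the local isomorphism.

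Next I would analyze the stabilizers. Under the isomorphism $I(\Q_p) \cong J$, each $\Gamma_j$ projects to a discrete subgroup of $J$, because $I(\Q)$ is a discrete subgroup of $I(\A_f) \cong J \times I(\A_f^p)$ and $g_j K^p g_j^{-1}$ is compact. For any $k$-point $x$ of $\mathcal{N}_{( (a_{\nu,i}, b_{\nu,i}) )_{i=1}^n}$, the stabilizer $\mathrm{Stab}_J(x)$ is a compact-open subgroup of $J$, since it consists precisely of those quasi-isogenies in $J$ that preserve the Dieudonn\'e lattice of the $p$-divisible group corresponding to $x$ together with its extra structure. Consequently $\Gamma_j \cap \mathrm{Stab}_J(x)$ is finite, being a discrete subgroup of a compact set. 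To conclude it is trivial, I would invoke the fact that non-trivial torsion in $I(\Q)$ falls into only finitely many conjugacy classes of bounded order, and that each non-trivial torsion element lies outside some sufficiently deep congruence subgroup of $G(\A_f^p)$ (Minkowski--Serre); shrinking $K^p$ inside such a congruence subgroup, and using that there are only finitely many $g_j$, simultaneously kills all torsion in every $\Gamma_j$, yielding freeness. Proper discontinuity then follows from the discreteness of $\Gamma_j$ in $J$ together with the local structure of $\mathcal{N}_{( (a_{\nu,i}, b_{\nu,i}) )_{i=1}^n}$: a quasi-compact neighborhood of $x$ meets only finitely many of its $\Gamma_j$-translates, all of which are trivial by freeness.

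The main obstacle is making the torsion-freeness statement precise for the specific inner form $I$ attached to $(\bG, \biota, \blambda)$ and ensuring the choice of $K^p$ works uniformly across the finitely many double-coset representatives $g_1, \ldots, g_r$. This is standard for PEL-type Shimura varieties and ultimately rests on the classical lemma of Minkowski--Serre that every arithmetic subgroup of a reductive $\Q$-group contains a torsion-free subgroup of finite index, applied to $I$. Once these ingredients are assembled, the disjoint union $\bigsqcup_j \mathcal{N}_{( (a_{\nu,i}, b_{\nu,i}) )_{i=1}^n} \to \bigsqcup_j \Gamma_j \setminus \mathcal{N}_{( (a_{\nu,i}, b_{\nu,i}) )_{i=1}^n} \cong \mathcal{M}_{K,\nu}^{ss}$ is a local isomorphism; restricting to a single component of the source yields the lemma.
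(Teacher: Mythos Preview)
Your approach is sound in outline, but it differs from the paper's treatment: the paper gives no argument at all, instead recording that the lemma was proved by Vollaard for $E^+=\Q$, signature $(1,2)$, $p$ inert, that the analogous statements for the other relevant local signatures appear in Howard--Pappas, Vollaard--Wedhorn, and the author's earlier paper, and that these combine (via the product decomposition of Theorem~\ref{Decomp}) to yield the general case. What you have written is essentially a reconstruction of the argument behind those citations, so your proposal is more self-contained than the paper's own justification.

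There is one inaccuracy worth fixing. Your deduction that $\Gamma_j$ is discrete in $J\cong I(\Q_p)$ rests on the claim that $I(\Q)$ is discrete in $I(\A_f)$; this is false here, because $I$ is a unitary \emph{similitude} group and its center makes $I(\R)$ non-compact. The conclusion is nonetheless correct, but the argument must use the specific structure of the inner form at the supersingular locus: the unitary part $I^1=\ker(c)$ has $I^1(\R)$ compact (all archimedean signatures become definite), so $I^1(\Q)$ is discrete in $I^1(\A_f)$, while the similitude character of any $\gamma\in\Gamma_j$ is a $\{p,\infty\}$-unit in $\Q^\times$, hence lies in $\pm p^{\Z}$, which is discrete in $\Q_p^\times$. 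Combining these two facts gives the discreteness of $\Gamma_j$ in $J$, after which your Minkowski--Serre torsion-freeness and stabilizer argument goes through as written. This is precisely the wrinkle handled in Vollaard's original proof, so with this correction your sketch matches the cited argument.
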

This was proven by Vollaard ~\cite{V} in the case that $E^+ = \Q$, the hermitian space $V$ is of  signature $(1,2)$, and the prime $p$ is inert. However, as mentioned in the papers of  Howard-Pappas ~\cite{GU22}, Vollaard-Wedhorn ~\cite{VW}, and the author ~\cite{GL4}, the analogous statement still holds for $E^+ = \Q$ when   the signature is equal to $(2,2)$ with $p$ inert, the signature is equal to $(1,3)$ with $p$ inert, or the signature is equal to $(2,2)$ with $p$ split, respectively. These results may be combined in the local setting to yield the above lemma for general $E^+$.

Using the Rapoport-Zink Uniformization Theorem and Lemma ~\ref{localisom}, the local results in Corollaries \ref{RZ1} through \ref{RZ3} implies the following description of the supersingular locus.

\begin{theorem}
Assume that $p$ splits completely in $E^+$ and is unramified in $E$. Fix a prime $\nu$ of the reflex field $F$ over $p$ and a sufficiently small compact open subgroup $K^p \subset G(\mathbb{A}_f^p)$. Let $(( a_i, b_i))_{i=1}^n$ be the tuple  defining the global signature condition for $\mathcal{M}_{K}$, and let  $( (a_{\nu, i}, b_{\nu,i}) )_{i=1}^n$ be the tuple of local signatures determined by $\nu$ as in Definition \ref{localsign}.  We will describe the geometry of the supersingular locus $\mathcal{M}_{K,\nu}^{ss}$ in the four cases $m=1$, $2$, $3$, and $4$. \\

\textbf{\underline{$\boldsymbol{m=1}:$} }\\
 If all $\mathfrak{p}_i$ are inert in $E$, the reduced $k$-scheme  $\mathcal{M}_{K,\nu}^{ss}$ is zero-dimensional.  If any of the primes $\mathfrak{p}_i$  are split in $E$, then  $\mathcal{M}_{K,\nu}^{ss}$  is empty. \\

\textbf{\underline{$\boldsymbol{m=2}:$} }\\
Define the following constant based on the prime $\nu$:
$$c_\nu \  =  \ \text{ the number of } \  (a_{\nu,i}, b_{\nu,i}) = (0,2)  \ \text{ such that } \ \mathfrak{p_i} \ \text{ is split in } \ E.$$
If $c_\nu= 0$, the reduced $k$-scheme $\mathcal{M}_{K,\nu}^{ss}$ is zero-dimensional.  Otherwise, $\mathcal{M}_{K,\nu}^{ss}$ is empty. \\

\textbf{\underline{$\boldsymbol{m=3}:$} }\\
If any of the primes  $\mathfrak{p}_i$ are split in $E$, the supersingular locus  $\mathcal{M}_{K,\nu}^{ss}$ is empty. Otherwise, define the following constant which depends only on the global signature condition:
$$c \ =  \ \text{ the number of } \  (a_{i}, b_{i}) = (1,2) $$
Then the reduced $k$-scheme $\mathcal{M}_{K,\nu}^{ss}$  has geometry as follows:
\begin{itemize}
\item{The supersingular locus $\mathcal{M}_{K,\nu}^{ss}$ is equi-dimensional of dimension $c$.}
\item{Each irreducible component of  $\mathcal{M}_{K,\nu}^{ss}$ is isomorphic over $k$ to $C^{c}$,
 a product (over $k$) of $c$ copies of the Fermat curve $C$.}
 \item{Any two irreducible components either intersect trivially or have intersection isomorphic over $k$ to $C^{t}$
 for some integer $0 \leq t \leq c$. \\}
\end{itemize}

\textbf{\underline{$\boldsymbol{m=4}:$} }\\
Define the following constants based on the prime $\nu$:
$$c_\nu \ =  \ \text{ the number of } \  (a_{\nu,i}, b_{\nu,i}) = (0,4)  \ \text{ such that } \ \mathfrak{p_i} \ \text{ is inert in } \ E$$
$$d_\nu \ =  \ \text{ the number of } \   (a_{\nu,i}, b_{\nu,i})  = (1,3)  \ \text{ such that } \ \mathfrak{p_i} \ \text{ is inert in } \ E$$
$$e_\nu \ =  \ \text{ the number of } \   (a_{\nu,i}, b_{\nu,i})  = (2,2)  \ \text{ such that } \ \mathfrak{p_i} \ \text{ is inert in } \ E$$
$$f_\nu  \ =  \ \text{ the number of } \   (a_{\nu,i}, b_{\nu,i})  = (2,2)  \ \text{ such that } \ \mathfrak{p_i} \ \text{ is split in } \ E$$
$$g_\nu \ = \ \text{ the number of } \   (a_{\nu,i}, b_{\nu,i}) \neq (2,2)  \ \text{ such that } \ \mathfrak{p_i} \ \text{ is split in } \ E.$$
Note that any of these integers could be zero, and that $c_\nu + d_\nu + e_\nu + f_\nu + g_\nu = n$. Either $g_\nu > 0$, in which case $\mathcal{M}_{K,\nu}^{ss}$ is empty, or $g_\nu = 0$, in which case $\mathcal{M}_{K,\nu}^{ss}$ has geometry as follows:

\begin{itemize}
\item{The supersingular locus   $\mathcal{M}_{K, \nu}^{ss}$ is equi-dimensional  of dimension $d_\nu+2e_\nu+f_\nu$.\\}

\item{Each irreducible component of $\mathcal{M}_{K, \nu}^{ss}$ is isomorphic over $k$ to:
$$ C^{d_\nu} \times_k  S^{e_\nu} \times_k (\mathbb{P}^1)^{f_\nu}$$ 
 a product (over $k$) of $d_\nu$ copies of the Fermat curve $C$, of $e_\nu$ copies of the Fermat surface $S$, and of $f_\nu$ copies of the projective line. \\}
 
\item{ Any two irreducible components either intersect trivially or have intersection isomorphic over $k$ to:
 $$ C^r \times_k S^{s_1} \times_k (\mathbb{P}^1)^{s_2 +t}$$
 for some nonnegative integers $r, s_1, s_2, t$ satisfying the following three conditions:
\begin{enumerate}
\item{ $$0 \leq r \leq d_\nu$$ }
\item{ $$0 \leq s_1 + s_2 \leq e_\nu$$ }
\item{ $$0 \leq t \leq f_\nu.$$ }
\end{enumerate} }
\end{itemize}
\end{theorem}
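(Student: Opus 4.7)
The plan is to deduce this theorem directly from the local corollaries (Corollaries \ref{RZ1}, \ref{RZ2}, \ref{RZ3}, and \ref{RZ}) by means of the Rapoport-Zink Uniformization Theorem combined with Lemma \ref{localisom}. The four cases $m=1,2,3,4$ will be handled uniformly: in each case, the Rapoport-Zink space $\mathcal{N}_{( (a_{\nu,i}, b_{\nu,i}) )_{i=1}^n}$ has already been shown in Section 5 to have exactly the geometry stated in the theorem, and the content of the proof is to transport this description across the uniformization.

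First, I would start from a chosen supersingular point $(\boldsymbol{A}, \biota, \blambda, \boldsymbol{\eta^p}K^p) \in \mathcal{M}_{K,\nu}^{ss}(k)$, and use $\bG = \boldsymbol{A}[p^\infty]$ with its induced structure as the basepoint of $\widetilde{\mathcal{N}}_{( (a_{\nu,i}, b_{\nu,i}) )_{i=1}^n}$; this is justified because there is a single isogeny class (as used earlier). The Rapoport-Zink Uniformization Theorem then yields
\[
\mathcal{M}_{K,\nu}^{ss} \cong I(\Q) \setminus \big( \mathcal{N}_{( (a_{\nu,i}, b_{\nu,i}) )_{i=1}^n} \times G(\A_f^p)/K^p \big),
\]
and via the double coset representatives $g_1, \ldots, g_r$ this decomposes as $\bigsqcup_{j=1}^r \Gamma_j \setminus \mathcal{N}_{( (a_{\nu,i}, b_{\nu,i}) )_{i=1}^n}$. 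By Lemma \ref{localisom}, for $K^p$ sufficiently small the natural surjection from $\mathcal{N}_{( (a_{\nu,i}, b_{\nu,i}) )_{i=1}^n}$ onto $\mathcal{M}_{K,\nu}^{ss}$ is a local isomorphism, so every property that is local on the source transfers to the target.

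Next, for each $m \in \{1,2,3,4\}$, I would invoke the corresponding corollary. For $m=1$ and $m=2$, Corollaries \ref{RZ1} and \ref{RZ2} show that $\mathcal{N}_{( (a_{\nu,i}, b_{\nu,i}) )_{i=1}^n}$ is either empty or zero-dimensional, and emptiness/zero-dimensionality is preserved under local isomorphism followed by discrete quotient, giving the stated descriptions of $\mathcal{M}_{K,\nu}^{ss}$ immediately. For $m=3$ (Corollary \ref{RZ3}) and $m=4$ (Corollary \ref{RZ}), equidimensionality, the dimensions $c$ and $d_\nu+2e_\nu+f_\nu$, the description of each irreducible component as $C^c$ or $C^{d_\nu}\times_k S^{e_\nu}\times_k (\mathbb{P}^1)^{f_\nu}$, and the list of possible intersection types $C^r\times_k S^{s_1}\times_k(\mathbb{P}^1)^{s_2+t}$ with the stated constraints, all transfer because they are local statements: the local isomorphism identifies a formal neighborhood in $\mathcal{N}_{( (a_{\nu,i}, b_{\nu,i}) )_{i=1}^n}$ with one in $\mathcal{M}_{K,\nu}^{ss}$, and the discrete group $\Gamma_j$ permutes irreducible components without altering their isomorphism type or the isomorphism type of pairwise intersections.

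The main obstacle will be the final bullet about intersection patterns in the $m=3, 4$ cases: while the \emph{types} of possible intersections transfer immediately from the local isomorphism, one must rule out ``new'' coincidences introduced by the quotient $\Gamma_j \setminus \mathcal{N}$, such as two distinct components of $\mathcal{N}$ being identified or two components that are disjoint downstairs appearing to meet in the quotient. This is precisely where the hypothesis that $K^p$ be sufficiently small enters: shrinking $K^p$ shrinks each $\Gamma_j$ enough to guarantee torsion-freeness and a free action on the set of irreducible components meeting any given one, as in the arguments cited in Lemma \ref{localisom} for $E^+ = \Q$. Once this is established, the intersection combinatorics stated in Corollaries \ref{RZ3} and \ref{RZ} descend verbatim, and the theorem follows.
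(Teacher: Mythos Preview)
Your proposal is correct and follows essentially the same approach as the paper: the paper simply states that the theorem follows by combining the Rapoport--Zink Uniformization Theorem, Lemma~\ref{localisom}, and the local Corollaries~\ref{RZ1}--\ref{RZ}, without writing out any further details. Your added discussion of why the local isomorphism transports equidimensionality, component types, and intersection patterns, and of the role of the ``sufficiently small $K^p$'' hypothesis in preventing new identifications under the $\Gamma_j$-quotient, makes explicit what the paper leaves implicit.
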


	\bibliographystyle{plain}

\bibliography{testbib4}

\end{document}